\theoremstyle{thmstyleone}%
\newtheorem{theorem}{Theorem}
\theoremstyle{thmstyletwo}%
\newtheorem{lemma}{Lemma}
\theoremstyle{thmstylethree}%
\newtheorem{definition}{Definition}%
\begin{document}

\title[Linear canonical space-time transform and convolution theorems]{Linear canonical space-time transform and convolution theorems}


\author[1,2]{\fnm{Yi-Qiao} \sur{Xu}}\email{yiqiaoxu1016@163.com}

\author*[1,2]{\fnm{Bing-Zhao} \sur{Li}}\email{li\_bingzhao@bit.edu.cn}

\affil[1]{\orgdiv{School of Mathematics and Statistics}, \orgname{Beijing Institute of Technology},\orgaddress{\city{ Beijing}, \postcode{100081}, \country{China}}}

\affil[2]{\orgdiv{Beijing Key Laboratory on MCAACI}, \orgname{Beijing Institute of Technology},\orgaddress{\city{ Beijing}, \postcode{100081}, \country{China}}}


\abstract{Following the idea of the fractional space-time Fourier transform, a linear canonical space-time transform for 16-dimensional space-time $C\ell_{3,1}$-valued signals is investigated in this paper. First, the definition of the proposed linear canonical space-time transform is given, and some related properties of this transform are obtained. Second, the convolution operator and the corresponding convolution theorem are proposed. Third, the convolution theorem associated with the two-sided linear canonical space-time transform is derived.}

\keywords{Space-time algebra, Fractional space-time Fourier transform, Linear canonical transform, Linear canonical space-time transform, Convolution theorem}



\maketitle

\section{Introduction}
\label{sec1}

Space-time algebra, exemplified by the Clifford $C\ell_{3,1}$ algebra, is an associative algebra developed within the $\mathbb{R}^{3,1}$ space-time vector space equipped with the Minkowski metric \cite{bib1}. Introducing the transforms on space-time algebra is becoming one of the hottest research topics. Hitzer is one of the pioneers who introduce the Fourier transform on space-time algebra. He first explained and formulated the space-time Fourier transform \cite{bib29}. The space-time Fourier transform (SFT) is a hypercomplex non-commutative Fourier transform operating on functions defined on $\mathbb{R}^{3,1}$ and taking values in Clifford's geometric algebra $C\ell_{3,1}$ \cite{bib7, bib8}. Following this direction, other researchers investigated the definitions and properties associated with space-time Fourier transform \cite{bib10, bib23, bib25}. Recently, inspired by the orthogonal splitting of space-time algebra, the Heisenberg's uncertainty principle and the Donoho-Stark uncertainty principle for the space-time Fourier transform  were established successfully \cite{bib5, bib8}.

In order to find the flexibility in representing information in the space-time domain, researchers try to apply the other transforms on space-time algebra. Hitzer generalized the SFT to a special affine space-time Fourier transform (SASFT) for 16-dimensional space-time multivector $C\ell_{3,1}$-valued signals over the domain of space-time $\mathbb{R}^{3,1}$ \cite{bib22}. Zayed proposed a space-time fractional Fourier transform (FrSFT) by generalizing the fractional Fourier transform (FrFT) into the space-time algebra \cite{bib1}. These newly defined transforms are the first in literature to offer analogues of the FrFT and the special affine Fourier transform (SAFT) for Clifford algebras. This opens a new research direction on the theories and applications for new transforms associated with space-time algebra.

The linear canonical transform (LCT) has three free parameters, which can enable the rotation and expansion of the time-frequency plane, this flexibility makes it particularly well-suited for the analysis and processing of non-stationary functions \cite{bib26}. The FT, FrFT, and Fresnel transform are special cases of the LCT. Based on this fact and inspired by the FrSFT and the SASFT, we investigate the linear canonical transform associated with space-time algebra. 

Convolution theory is of great significance in various fields such as mathematics, signal processing, image processing, and optics, especially in the design and implementation of multiplier filters \cite{bib39}. In recent years, great efforts have been made to popularize classical convolution theory and extend its applicability to a wider range of fields. Therefore, many convolution theorems related to linear canonical transform (LCT) have been proposed \cite{bib40, bib41, bib42}. Researchers have also shown significant interest in convolution theory in the field of Clifford algebra. The classical convolution and Mustard convolution of the general two-sided quaternion Fourier transform and the general steerable two-sided Clifford Fourier transform were explored in \cite{bib34, bib35}. Aajaz A. Teali and Firdous A. Shah extended the convolution theorem to Clifford-valued linear canonical transform and two-sided Clifford-valued linear canonical transform \cite{bib37, bib38}. Their contributions provide new tools and methods for signal processing, image processing, and related fields.

The article is structured as follows: Section \ref{sec2} gives an overview of space-time algebra. In section \ref{sec3}, we introduce the concept of linear canonical space-time transform (LCST) and learn its basic properties. Section \ref{sec4} presents the convolution theorem. Section \ref{sec5} presents the convolution theorem for the two-sided LCST. Finally, this article ends with the \ref{sec6} section.

\section{Preliminaries}
\label{sec2}
\subsection{Space-time algebra}
\label{subsec1}

The space-time algebra, is the Clifford algebra $C\ell_{3,1}$ derived from the geometric product of all $\mathbb{R}^{3,1}$ and a 16-dimensional basis algebra:
\begin{equation}
	C\ell_{3,1}=\rm{span}\{1,e_t,e_1,e_2,e_3,e_{12},e_{13},e_{23},e_{t1},e_{t2},e_{t3},i_3,e_{t12},e_{t13},e_{t23},i_{st}\}.\label{eq2}
\end{equation}
It is composed of scalar, vector, bivector, trivector, and pseudoscalar elements.

The associative geometric multiplication of the basis vectors obeys the rule:
\begin{equation}
	e_ie_j+e_je_i=2\epsilon_j\delta(i-j),\qquad i,j\in\{t,1,2,3\}.\label{eq1}
\end{equation}
In the above the Kronecker symbol $\delta(i-j)$ is $1$ if $i = j$, zero otherwise. And $\epsilon_j=-1$, for $j=\{1,2,3\}$, and $\epsilon_j=-1$, for $j = t$.

Here, we will use conventional index notations such as $e_{12}=e_1e_2$, $e_{t12}=e_te_{12}$, etc. Here, $i_3= e_1e_2e_3$ is the unit volume of the 3-vector, $i_{st} = e_ti_3$ is a pseudo-scalar, both squared by $-1$. That is, ${i_3}^2 = {i_ { st}}^2 = -1$.

Each spatial element $h$ can be expressed as a real linear combination of elementary edges given by (\ref{eq2}). This notation allows $h$ to be expressed in the general form:
\begin{equation}
	h=\left\langle h\right\rangle_0+\left\langle h\right\rangle_1+\left\langle h\right\rangle_2+\left\langle h\right\rangle_3+\left\langle h\right\rangle_4,\label{eq3}
\end{equation}
$\left\langle h\right\rangle_0=h_0$, $\left\langle h\right\rangle_1=h_te_t+h_1e_1+h_2e_2+h_3e_3$, $\left\langle h\right\rangle_2=h_{12}e_{12}+h_{13}e_{13}+h_{23}e_{23}+h_{t1}e_{t1}+h_{t2}e_{t2}+h_{t3}e_{t3}$, $\left\langle h\right\rangle_3=h_{123}i_3+h_{t12}e_{t12}+h_{t13}e_{t13}+h_{t23}e_{t23}$, $\left\langle h\right\rangle_4=h_{t123}i_{st}$, with $h_0, h_t, h_1, h_2, h_3, h_{12}, h_{13}, h_{23}, h_{t1}, h_{t2}, h_{t3}, h_{123}, h_{t12}, h_{t13}, h_{t23}, h_{t123} \in \mathbb{R}$. $\left\langle h\right\rangle_0$, $\left\langle h\right\rangle_1$, $\left\langle h\right\rangle_2$, $\left\langle h\right\rangle_3$ and $\left\langle h\right\rangle_4$ are respectively called the 0th degree scalar part, the 1st degree vector part, the 2nd degree bivector part, the 3rd degree three vector part and the pseudo-scalar part.

A useful tool for us will be the following split of space-time multi-vector $h \in C\ell_{3,1}$:
\begin{equation}
	h=h_{+}+h_{-}, \ where\ h_{\pm}=\frac{1}{2}(h \pm e_t\ h\ i_3).\label{eq5}
\end{equation}

It is worth noting that the decomposition (\ref{eq5}) yields the norm identity:
\begin{equation}
	|h|^2=|h_{+}|^2+|h_{-}|^2.\label{eq8}
\end{equation}

We have the following key identities for shifting:
\begin{equation}
	exp\{\alpha e_t\} h_{\pm} exp\{\alpha' i_3\}=h_{\pm} exp\{(\alpha' \mp \alpha) i_3\}=exp\{(\alpha \mp \alpha') e_t\} h_{\pm}, \ \forall \alpha, \alpha' \in \mathbb{R}\label{eq7}
\end{equation}

The principal reverse of $h \in C\ell_{3,1}$ is defined by
\begin{equation}
	\begin{aligned}
		\widetilde{h}&=h_0e_0-h_te_t+\sum_{i=1}^3h_ie_i-h_{12}e_{12}-h_{13}e_{13}-h_{23}e_{23}+\sum_{i=1}^3h_{ti}e_{ti}\\&-h_3i_3+h_{t12}e_{t12}+h_{t13}e_{t13}+h_{t23}e_{t23}-h_{t123}i_{st}.\label{eq9}
	\end{aligned}
\end{equation}

In particular, each space-time element has $h_1$ and $h_2$
\begin{equation}
	\widetilde{h_1+h_2}=\widetilde{h_1}+\widetilde{h_2};\widetilde{h_1h_2}=\widetilde{h_2}\widetilde{h_1}\mathrm{~;~}\widetilde{\widetilde{h_1}}=h_1.\label{eq10}
\end{equation}
The trace of each element $h$ of $C\ell_{3,1}$ is defined as the scalar part of its decomposition, as shown in (\ref{eq3}). That is, it is expressed by the following equation:
\begin{equation}
	Tr(h) \overset{\underset{\mathrm{def}}{}}{=} h_0.\label{eq11}
\end{equation}

The symmetry of cyclic multiplication can be easily verified in the following way:
\begin{equation}
	Tr(h_1h_2h_3)~=~Tr(h_3h_1h_2),~\forall h_1,~h_2,~h_3\in C\ell_{3,1}.\label{eq12}
\end{equation}

Additionally, we can easily prove that
\begin{equation}
	\int_{\mathbb{R}}\mathbf{e}^{ht(x-x_0)}dt=2\pi\delta(x-x_0),\label{eq16}
\end{equation}

Let $h \in C\ell_{3,1}$, $\alpha,\alpha' \in \mathbb{R}$, one has
\begin{equation}
|e_thi_3|=|h|\mathrm{~,~}\left|\mathbf{e}^{\alpha e_t}h\mathbf{e}^{\alpha^{\prime}i_3}\right|=|h|.\label{eq18}
\end{equation}

Based on the linearity and properties (\ref{eq12}) of the traces (\ref{eq11}), the orthogonality of the spatial partitioning can be directly verified in the following sense

Let $f,g \in C\ell_{3,1}, \alpha \in \mathbb{R}$, then
\begin{equation}
Tr\left(\mathbf{e}^{\alpha i_3}\widetilde{f_\pm}g_\mp\right)=0.\label{eq19}
\end{equation}

\subsection{Related works}
\label{subsec2}

Next, we review the definition of the space-time Fourier transform (SFT).

\begin{definition}\label{df1}
	The space-time Fourier transform of $f \in L^1(\mathbb{R}^{3,1},C\ell_{3,1})$ is defined as \cite{bib29}
	\begin{equation}
		\mathcal{F}_{\mathrm{S}FT}[f](\boldsymbol{w})=\int_{\mathbb{R}^{3,1}}\boldsymbol{e}^{-e_tw_tt}f(\boldsymbol{x})\exp\left\{-i_3\underline{x}\cdot\underline{w}\right\}d^4\boldsymbol{x},\label{eq21}
	\end{equation}
	 where $\boldsymbol{w}=w_te_t+\underline{w}\in\mathbb{R}^{3,1},\underline{w}=w_1e_1+w_2e_2+w_3e_3\in\mathbb{R}^3$ is the space-time frequency vector, $\begin{aligned}\boldsymbol{x}&=te_t+\underline{x}\in\mathbb{R}^{3,1},\underline{x}=x_1e_1+x_2e_2+x_3e_3\in\mathbb{R}^3\end{aligned}$ is the space-time vector, and $\underline{x}\cdot\underline{w}= {\textstyle \sum_{k=1}^{k=3}x_kw_k} $ is the natural $\mathbb{R}^3-scalar$ product.
\end{definition}

The inversion formula for the SFT is given by
\begin{equation}
	f(\boldsymbol{x})=\frac1{(2\pi)^4}\int_{\mathbb{R}^{3,1}}\exp{\{e_tw_tt\}}~\mathcal{F}_{SFT}[f](\boldsymbol{w})~\exp{\{i_3\underline{x}\cdot\underline{w}\}}~d^4\boldsymbol{w}.\label{eq22}
\end{equation}

Based on the above research, Zayed first introduced the fractional Fourier transform (FrFT) to space-time algebra \cite{bib1}. Next, we review the definition of the fractional space-time Fourier transform (FrSFT).

\begin{definition}\label{df2}
	The fractional space-time Fourier transform of $f \in L^1(\mathbb{R}^{3,1},C\ell_{3,1})$ with respect to a parameter $\alpha$, is defined as \cite{bib1}
	\begin{equation}
		\mathcal{F}_{rSFT}^\alpha[f](\boldsymbol{w})=\int_{\mathbb{R}^{3,1}}\boldsymbol{e}^{-e_tw_tt}\left.f(\boldsymbol{x})\mathcal{K}_{i_3}^\alpha(\underline{x},\underline{w})\right.d^4\boldsymbol{x},\label{eq23}
	\end{equation}
	the right kernel factor $\mathcal{K}_{i_3}^\alpha$ is
	\begin{equation}
		\mathcal{K}_{i_3}^{\alpha}(\underline{x},\underline{w})=\begin{cases}(\csc\alpha)^{\frac32}\boldsymbol{e}^{i_3\left((\underline{x}^2+\underline{w}^2)\frac{\cot\alpha}2-\underline{x}\cdot\underline{w}\csc\alpha+\frac{2\alpha-\pi}4\right)},&\alpha\neq n\pi\\\delta(\underline{w}-\underline{x}),&\alpha=2n\pi\\\delta(\underline{w}+\underline{x}),&\alpha=(2n+1)\pi\end{cases},\label{eq24}
	\end{equation}
	where $sin\alpha>0$. It is assumed that $\alpha \neq n\pi$.
\end{definition}

The inversion formula for the FrSFT is given by
\begin{equation}
	f(\boldsymbol{x})=\frac{(\csc\alpha)^{\frac32}}{(2\pi)^4}\int_{\mathbb{R}^{3,1}}\boldsymbol{e}^{e_tw_tt}\mathcal{F}_{rSFT}^\alpha[f](\boldsymbol{w})\mathcal{K}_{-i_3}^\alpha(\underline{x},\underline{w})d^4\boldsymbol{w},\label{eq25}
\end{equation}
where the space-time frequency volume $d^4\boldsymbol{w}=dw_tdw_1dw_2dw_3dw_4$.

Based on the existing research results, we introduce the concept of linear canonical space-time transform (LCST) by replacing the fractional Fourier kernel in the fractional space-time Fourier transform with a linear canonical kernel, and propose the fundamental properties of the proposed linear canonical space-time transform.

\section{Linear canonical space-time transform}
\label{sec3}
\subsection{Definition}
\label{subsec3}

\begin{definition}\label{df3}
	The linear canonical space-time transform of $f \in L^1(\mathbb{R}^{3,1},C\ell_{3,1})$ with respect to a parameter matrix $A=(a,b;c,d) \in \mathbb{R}^{2\times2}$, is defined as
	\begin{equation}
		\mathcal{L}_{A}[f](\boldsymbol{w})=\int_{\mathbb{R}^{3,1}}\boldsymbol{e}^{-e_tw_tt}\left.f(\boldsymbol{x})\mathcal{K}_{i_3}^A(\underline{x},\underline{w})\right.d^4\boldsymbol{x},\label{eq26}
	\end{equation}
	here we modify the right kernel factor $\mathcal{K}_{i_3}^A$ to
	\begin{align*}
	\mathcal{K}_{i_3}^{A}(\underline{x},\underline{w})=\begin{cases}\frac{1}{(2\pi b)^{\frac{3}{2}}} \boldsymbol{e}^{i_3\left(\frac{a}{2b}\underline{x}^2+\frac{d}{2b}\underline{w}^2-\frac{\underline{x}\cdot\underline{w}}{b}\right)},&b\neq 0\\\sqrt{d} \boldsymbol{e}^{i_3\left(\frac{cd}{2}\underline{w}^2\right)}\delta(\underline{x}-d\underline{w}),&b=0\\\end{cases}.\label{eq27}
	\end{align*}
	where $a,b,c,d \in \mathbb{R}$, and $ad-bc=1$.
\end{definition}

\subsection{Properties}
\label{subsec4}

\begin{theorem}\label{thm3}
(Inversion of LCST) Let $f,~\mathcal{L}_{A}[f]\in L^1(\mathbb{R}^{3,1},C\ell_{3,1})$, then $f$ can be recovered from $~\mathcal{L}_{A}[f]$ by using the formula
	\begin{equation}
		f(\boldsymbol{x})=\frac{1}{2\pi} \int_{\mathbb{R}^{3,1}}\boldsymbol{e}^{e_tw_tt}\mathcal{L}_{A}[f](\boldsymbol{w})\mathcal{K}_{-i_3}^A(\underline{x},\underline{w})d^4\boldsymbol{w},\label{eq28}
	\end{equation}
	where the space-time frequency volume $d^4\boldsymbol{w}=dw_tdw_1dw_2dw_3dw_4$.
\end{theorem}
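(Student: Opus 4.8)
The plan is to substitute the definition (\ref{eq26}) of the transform into the right-hand side of (\ref{eq28}) and then reduce the resulting iterated integral to $f$ by applying the Dirac identity (\ref{eq16}) twice: once in the time variable and once, componentwise, in the three spatial variables. Writing the inner integration variable of $\mathcal{L}_A[f]$ as $\boldsymbol{y}=\tau e_t+\underline{y}$ to avoid a clash with the output point $\boldsymbol{x}=te_t+\underline{x}$, the right-hand side of (\ref{eq28}) takes the form
\begin{equation*}
\frac{1}{2\pi}\int_{\mathbb{R}^{3,1}}\int_{\mathbb{R}^{3,1}}\boldsymbol{e}^{e_tw_tt}\,\boldsymbol{e}^{-e_tw_t\tau}\,f(\boldsymbol{y})\,\mathcal{K}_{i_3}^{A}(\underline{y},\underline{w})\,\mathcal{K}_{-i_3}^{A}(\underline{x},\underline{w})\,d^4\boldsymbol{y}\,d^4\boldsymbol{w},
\end{equation*}
where the interchange of the order of integration is legitimate since $f,\mathcal{L}_A[f]\in L^1(\mathbb{R}^{3,1},C\ell_{3,1})$.

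The central difficulty is the non-commutativity of $C\ell_{3,1}$, which forbids reordering the factors at will; what makes the computation go through is that the two time-exponentials are functions of $e_t$ alone and are adjacent, so they combine as $\boldsymbol{e}^{e_tw_tt}\boldsymbol{e}^{-e_tw_t\tau}=\boldsymbol{e}^{e_tw_t(t-\tau)}$, while the two kernel factors are functions of $i_3$ alone and hence commute with each other. I would split $d^4\boldsymbol{w}=dw_t\,d^3\underline{w}$ and carry out the $w_t$-integration first. Since only the combined time-exponential depends on $w_t$, identity (\ref{eq16}) gives $\int_{\mathbb{R}}\boldsymbol{e}^{e_tw_t(t-\tau)}\,dw_t=2\pi\,\delta(t-\tau)$, a scalar; being central, it passes freely through the Clifford product, cancels the prefactor $\tfrac{1}{2\pi}$, and collapses the $\tau$-integration to $\tau=t$, so that $f(\boldsymbol{y})$ is replaced by $f(te_t+\underline{y})$.

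It then remains to evaluate the purely spatial integral over $\underline{y}$ and $\underline{w}$. Multiplying the two kernels, the quadratic terms $\tfrac{d}{2b}\underline{w}^2$ cancel and leave
\begin{equation*}
\mathcal{K}_{i_3}^{A}(\underline{y},\underline{w})\,\mathcal{K}_{-i_3}^{A}(\underline{x},\underline{w})=\frac{1}{(2\pi b)^{3}}\,\boldsymbol{e}^{i_3\left(\frac{a}{2b}(\underline{y}^2-\underline{x}^2)-\frac{1}{b}\underline{w}\cdot(\underline{y}-\underline{x})\right)}.
\end{equation*}
Only the linear phase depends on $\underline{w}$, so integrating it componentwise and invoking (\ref{eq16}) once more (with the scaling $\delta(s/b)=|b|\,\delta(s)$, taking $b>0$ as required by the normalization $(2\pi b)^{3/2}$) yields $(2\pi b)^{3}\,\delta^{3}(\underline{y}-\underline{x})$, which exactly cancels the factor $(2\pi b)^{-3}$. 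This scalar delta again commutes through, forces $\underline{y}=\underline{x}$ so that the residual quadratic phase $\boldsymbol{e}^{i_3\frac{a}{2b}(\underline{y}^2-\underline{x}^2)}$ reduces to $1$, and collapses the $\underline{y}$-integration, leaving precisely $f(te_t+\underline{x})=f(\boldsymbol{x})$. The only genuine obstacle is the ordering bookkeeping just described; once one observes that each of the two integrations produces a central scalar delta, the remaining manipulations are identical to those in the scalar inversion formula.
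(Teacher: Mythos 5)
Your proposal is correct and follows essentially the same route as the paper's proof: substitute the definition of $\mathcal{L}_A[f]$, combine the adjacent $e_t$-exponentials and the adjacent $i_3$-kernels (whose $\tfrac{d}{2b}\underline{w}^2$ phases cancel), and apply the delta identity (\ref{eq16}) once in $w_t$ and once componentwise in $\underline{w}$ to collapse both integrations. Your explicit attention to the non-commutative ordering and to the scaling $\delta(s/b)=|b|\delta(s)$ with $b>0$ is a minor refinement of bookkeeping the paper leaves implicit, not a different method.
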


\begin{proof}
	Based on identity (\ref{eq16}) and Definition \ref{df3}, we obtain
	\begin{equation}\nonumber
		\begin{aligned}
			&\frac{1}{2\pi}\int_{\mathbb{R}^{3,1}}\boldsymbol{e}^{e_{t}w_{t}t}\mathcal{L}_{A}[f](\boldsymbol{w})\mathcal{K}_{-i_{3}}^{A}(\underline{x},\underline{w})d^4\boldsymbol{w} \\
			&=\frac{1}{2\pi}\int_{\mathbb{R}^{3,1}}\int_{\mathbb{R}^{3,1}}\boldsymbol{e}^{e_tw_tt}\boldsymbol{e}^{-e_tw_ts}f(\boldsymbol{y})\mathcal{K}_{i_3}^{A}(\underline{y},\underline{w})\mathcal{K}_{-i_3}^A(\underline{x},\underline{w})d^4\boldsymbol{y}d^4\boldsymbol{w} \\
			&=\frac{1}{2\pi}\int_{\mathbb{R}^{3,1}}\left[\int_{\mathbb{R}}\boldsymbol{e}^{w_t(t-s)e_t}dw_t\right]f(y)\boldsymbol{e}^{\frac{-i_3a}{2b}\left(\underline{x}^2-\underline{y}^2\right)} \\
			&\times\left[\int_{\mathbb{R}^3}\boldsymbol{e}^{\frac{i_3}{b}\left((\underline{x}-\underline{y})\cdot\underline{w}\right)}dw_1dw_2dw_3\right]d^4\boldsymbol{y} \\
			&=\frac{1}{(2\pi)^4b^3}\int_{\mathbb{R}^{3,1}}2\pi\delta(t-s)\left.f(\boldsymbol{y})\right.e^{\frac{-i_3a}{2b}\left(\underline{x}^2-\underline{y}^2\right)}\left(2\pi b\right)^3\delta(\underline{x}-\underline{y})d^4\boldsymbol{y} \\
			&=f(\boldsymbol{x}).
		\end{aligned}
	\end{equation}
\end{proof}

\begin{theorem}\label{thm4}
	(Properties) Let $f,g \in L^1(\mathbb{R}^{3,1},C\ell_{3,1})$, with $\boldsymbol{x}, \boldsymbol{w} \in \mathbb{R}^{3,1}$, constants $\boldsymbol{u}=(u_t,\underline{u}), \boldsymbol{y}=(y_t,\underline{y}) \in \mathbb{R}^{3,1}$. We have
	
	\noindent
	(1). Right-hand linearity that commute with $i_3$ and left-hand linearity that commute with $e_t$:
	\begin{equation}\nonumber
		\mathcal{L}_{A}[Mf+gN](\boldsymbol{w})=M\mathcal{L}_{A}[f](\boldsymbol{w})+\mathcal{L}_{A}[g](\boldsymbol{w})N,
	\end{equation}
	with$M=M_0+M_te_t+M_{12}e_{12}+M_{23}e_{23}+M_{13}e_{13}+M_{t12}e_{t12}+M_{t23}e_{t23}+M_{t13}e_{t13}$, $N=N_0+N_1e_1+N_2e_2+N_3e_3+N_{12}e_{12}+N_{23}e_{23}+N_{13}e_{13}+N_{123}i_3$.
	
	\noindent
	(2). Modulation covariance:
	\begin{equation}\nonumber
		\begin{aligned}
			\mathcal{L}_{A}{\left[\boldsymbol{e}^{-e_tu_tt}\left.f(\boldsymbol{x})\right.\boldsymbol{e}^{-i_3\underline{x}\cdot\underline{u}}\right]}(\boldsymbol{w})&=\mathcal{L}_{A}{\left[f(\boldsymbol{x})\right]}\left(\boldsymbol{w}+(u_te_t+b\underline{u})\right)\\&\times \boldsymbol{e}^{\frac{-i_3}2d\left(b\underline{u}^2+2\underline{w}\cdot\underline{u}\right)}.
		\end{aligned}
	\end{equation}
	
	\noindent
	(3). Translation covariance:
	\begin{equation}\nonumber
		\begin{gathered}
			\mathcal{L}_{A}\bigl[f(\boldsymbol{x}-\boldsymbol{y})\bigr](\boldsymbol{w}) \left.=\boldsymbol{e}^{-e_ty_tw_t}\mathcal{L}_{A}[f]\left(w_te_t+\underline{w}-a\right.\underline{y}\right) \\
			\times \boldsymbol{e}^{i_3\frac{1-a}{b}\left(\frac{a\underline{y}^2}2-\underline{y}\cdot\underline{w}\right)}. 
		\end{gathered}
	\end{equation}
	
	\noindent
	(4). Reflection: \quad $\mathcal{L}_{A}[f(-\boldsymbol{x})](\boldsymbol{w})=\mathcal{L}_{A}[f(\boldsymbol{x})](-\boldsymbol{w})$.
\end{theorem}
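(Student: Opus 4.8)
The plan is to establish all four identities directly from Definition \ref{df3}, exploiting the fact that the LCST has a fixed \emph{sandwich} structure: a left exponential $\boldsymbol{e}^{-e_tw_tt}$ built from $e_t$, the signal $f$ in the middle, and a right kernel $\mathcal{K}_{i_3}^A$ built from $i_3$. The guiding principle is that exponentials of $e_t$ may be freely combined with one another (all functions of a single unit squaring to $-1$) and, separately, exponentials of $i_3$ may be combined with one another, but one must never interchange an $e_t$-factor with an $i_3$-factor, since $e_t$ and $i_3$ anticommute. Because the modulation, translation and reflection all act on $f$ from the left through $e_t$ and from the right through $i_3$, no factor ever has to be moved across $f$, so the whole argument splits into independent manipulations at the two ends.

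For the linearity statement (1), I would first observe that the admissible left multipliers $M$ are precisely the real span of the blades commuting with $e_t$, namely $\{1,e_t,e_{12},e_{13},e_{23},e_{t12},e_{t13},e_{t23}\}$ (those with an even number of spatial indices), while the admissible right multipliers $N$ are the real span of the blades commuting with $i_3$, namely $\{1,e_1,e_2,e_3,e_{12},e_{13},e_{23},i_3\}$ (those containing no $e_t$ factor). Splitting the integral by additivity, $M$ then commutes with the left factor $\boldsymbol{e}^{-e_tw_tt}$ and pulls to the far left of $\mathcal{L}_A[f]$, while $N$ commutes with the right kernel $\mathcal{K}_{i_3}^A$ and pulls to the far right of $\mathcal{L}_A[g]$; recognising the two resulting integrals as $\mathcal{L}_A[f]$ and $\mathcal{L}_A[g]$ finishes it.

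For the modulation (2) and translation (3) covariances the computation is a completing-the-square inside the $i_3$-exponent of the kernel. In (2) the left modulation $\boldsymbol{e}^{-e_tu_tt}$ merges with $\boldsymbol{e}^{-e_tw_tt}$ into $\boldsymbol{e}^{-e_t(w_t+u_t)t}$, accounting for the shift $w_t\mapsto w_t+u_t$, while the right modulation $\boldsymbol{e}^{-i_3\underline{x}\cdot\underline{u}}$ merges with the kernel exponent; rewriting $\frac{a}{2b}\underline{x}^2+\frac{d}{2b}\underline{w}^2-\frac{\underline{x}\cdot\underline{w}}{b}-\underline{x}\cdot\underline{u}$ as the kernel evaluated at $\underline{w}+b\underline{u}$ plus a purely frequency-dependent remainder produces the shift $\underline{w}\mapsto\underline{w}+b\underline{u}$ together with the trailing phase $\boldsymbol{e}^{-\frac{i_3}{2}d(b\underline{u}^2+2\underline{w}\cdot\underline{u})}$, which, being a function of $i_3$, detaches to the right past the kernel. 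In (3) I would substitute $\boldsymbol{z}=\boldsymbol{x}-\boldsymbol{y}$: the constant factor $\boldsymbol{e}^{-e_ty_tw_t}$ separates off to the left, and expanding $\mathcal{K}_{i_3}^A(\underline{z}+\underline{y},\underline{w})$ and matching the $\underline{z}$-linear terms forces the frequency shift $\underline{w}\mapsto\underline{w}-a\underline{y}$, the leftover $\underline{y}$-dependent terms assembling, after using $ad-bc=1$, into the stated trailing $i_3$-phase.

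Finally, for the reflection (4) I would change variables $\boldsymbol{z}=-\boldsymbol{x}$: the space-time volume is four-dimensional so the Jacobian is $(-1)^4=1$, the left factor becomes $\boldsymbol{e}^{e_tw_ts}=\boldsymbol{e}^{-e_t(-w_t)s}$, and the evenness of the kernel exponent in $(\underline{x},\underline{w})$ gives the symmetry $\mathcal{K}_{i_3}^A(-\underline{z},\underline{w})=\mathcal{K}_{i_3}^A(\underline{z},-\underline{w})$, so the integral is exactly $\mathcal{L}_A[f](-\boldsymbol{w})$. Throughout, I expect the only real obstacle to be bookkeeping of the non-commutativity: one must verify that each detached phase is of the correct type, an $e_t$-exponential on the left and an $i_3$-exponential on the right, so that it legitimately commutes through only the factors on its own side, and for (1) one must pin down the two centralizers exactly; everything else is routine algebra.
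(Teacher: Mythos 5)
Your strategy coincides with the paper's: the paper proves only item (2), and it does so by exactly the computation you describe, run in reverse (it starts from $\mathcal{L}_{A}[f]\left(\boldsymbol{w}+u_te_t+b\underline{u}\right)\boldsymbol{e}^{-\frac{i_3}{2}d\left(b\underline{u}^2+2\underline{w}\cdot\underline{u}\right)}$, expands by the definition, and recombines the $i_3$-phases into $\boldsymbol{e}^{-i_3\underline{x}\cdot\underline{u}}\mathcal{K}_{i_3}^A(\underline{x},\underline{w})$ while the $e_t$-exponentials merge on the left). Your guiding principle of never moving an $e_t$-factor across an $i_3$-factor, and your identification of the two centralizers in (1) (the span of $\{1,e_t,e_{12},e_{13},e_{23},e_{t12},e_{t13},e_{t23}\}$ on the left and of $\{1,e_1,e_2,e_3,e_{12},e_{13},e_{23},i_3\}$ on the right), are correct and match the multipliers $M$, $N$ in the statement; item (4) also goes through exactly as you sketch.

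There is, however, one concrete failure in your item (3): the completing-the-square you outline does not produce the phase printed in the theorem. Carrying out your substitution $\underline{x}=\underline{z}+\underline{y}$ and comparing the resulting exponent with the kernel evaluated at $\underline{w}-a\underline{y}$, the leftover $\underline{y}$-dependent exponent is
\begin{equation}\nonumber
\frac{a}{2b}\left(1-ad\right)\underline{y}^2+\frac{ad-1}{b}\,\underline{y}\cdot\underline{w}
=-c\left(\frac{a\underline{y}^2}{2}-\underline{y}\cdot\underline{w}\right),
\end{equation}
where the last equality uses $ad-bc=1$. Hence the trailing factor is $\boldsymbol{e}^{\,i_3\frac{1-ad}{b}\left(\frac{a\underline{y}^2}{2}-\underline{y}\cdot\underline{w}\right)}$, which agrees with the stated $\boldsymbol{e}^{\,i_3\frac{1-a}{b}\left(\frac{a\underline{y}^2}{2}-\underline{y}\cdot\underline{w}\right)}$ only when $d=1$. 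So your assertion that the leftover terms ``assemble, after using $ad-bc=1$, into the stated trailing $i_3$-phase'' cannot be carried out as written: the theorem's formula evidently carries a typo ($1-a$ in place of $1-ad$, i.e.\ the coefficient should be $-c$, consistent with the classical LCT shift theorem), and an honest execution of your method would expose this rather than confirm the printed formula. Since the paper gives no proof of (3), this discrepancy is between your sketch and the statement itself, not between your method and the paper's argument.
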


\begin{proof}
We only give the proof of (2) as an example.
	\begin{equation}\nonumber
		\begin{aligned}
			&\mathcal{L}_{A}{\left[f(\boldsymbol{x})\right]}\left(\boldsymbol{w}+(u_te_t+b\underline{u})\right) \times \boldsymbol{e}^{\frac{-i_3}2d\left(b\underline{u}^2+2\underline{w}\cdot\underline{u}\right)}\\
			=&\int_{\mathbb{R}^{3,1}}\boldsymbol{e}^{-e_t(w_t+u_t)t}\left.f(\boldsymbol{x})\mathcal{K}_{i_3}^A(\underline{x},\underline{w}+b\underline{u})\right.d^4\boldsymbol{x}\times \boldsymbol{e}^{\frac{-i_3}2d\left(b\underline{u}^2+2\underline{w}\cdot\underline{u}\right)}\\
			=&\int_{\mathbb{R}^{3,1}}\boldsymbol{e}^{-e_t(w_t+u_t)t}\left.f(\boldsymbol{x})\frac{1}{(2\pi b)^{\frac{3}{2}}} \boldsymbol{e}^{i_3\left(\frac{a}{2b}\underline{x}^2+\frac{d}{2b}(\underline{w}+b\underline{u})^2-\frac{\underline{x}\cdot(\underline{w}+b\underline{u})}{b}\right)}\right.d^4\boldsymbol{x}\times \boldsymbol{e}^{\frac{-i_3}2d\left(b\underline{u}^2+2\underline{w}\cdot\underline{u}\right)}\\
			=&\int_{\mathbb{R}^{3,1}}\boldsymbol{e}^{-e_tw_tt}[\boldsymbol{e}^{-e_tu_tt}\left.f(\boldsymbol{x})\boldsymbol{e}^{-i_3\underline{x}\cdot\underline{u}}]\mathcal{K}_{i_3}^A(\underline{x},\underline{w})\right.d^4\boldsymbol{x}\\
			=&\mathcal{L}_{A}{\left[\boldsymbol{e}^{-e_tu_tt}\left.f(\boldsymbol{x})\right.\boldsymbol{e}^{-i_3\underline{x}\cdot\underline{u}}\right]}(\boldsymbol{w}).
		\end{aligned}
	\end{equation}
\end{proof}

\begin{theorem}\label{thm5}
	(Plancherel theorem)
	Let $\mathcal{L}_{A}[f_1]$ and $\mathcal{L}_{A}[f_2]$ be the LCSTs of $ f_1$ and $f_2 \in L^{2}(\mathbb{R}^{3,1},C\ell_{3,1})$, respectively. Then, we have
	\begin{equation}
		\left\langle\mathcal{L}_{A}[f_1],\mathcal{L}_{A}[f_2]\right\rangle_{L^2(\mathbb{R}^{3,1},C\ell_{3,1})}=2 \pi\langle f_1,f_2\rangle_{L^2(\mathbb{R}^{3,1},C\ell_{3,1})}.\label{eq30}
	\end{equation}
\end{theorem}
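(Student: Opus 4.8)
The plan is to follow the same mechanism as the proof of the inversion theorem (Theorem \ref{thm3}), now carried out inside the trace (scalar-part) inner product $\langle f,g\rangle_{L^2(\mathbb{R}^{3,1},C\ell_{3,1})}=\int_{\mathbb{R}^{3,1}}Tr\bigl(\widetilde{f(\boldsymbol{x})}\,g(\boldsymbol{x})\bigr)\,d^4\boldsymbol{x}$. First I would expand the left-hand side of (\ref{eq30}) by inserting the definition (\ref{eq26}) of $\mathcal{L}_{A}[f_2]$ and, for $\mathcal{L}_{A}[f_1]$, its principal reverse. Using the anti-automorphism rules (\ref{eq10}) together with $\widetilde{i_3}=-i_3$ and $\widetilde{e_t}=-e_t$, one gets $\widetilde{\mathcal{L}_{A}[f_1](\boldsymbol{w})}=\int_{\mathbb{R}^{3,1}}\mathcal{K}_{-i_3}^{A}(\underline{y},\underline{w})\,\widetilde{f_1(\boldsymbol{y})}\,\boldsymbol{e}^{e_tw_ts}\,d^4\boldsymbol{y}$, so that the left-hand side becomes a triple integral over $\boldsymbol{x}=(t,\underline{x})$, $\boldsymbol{y}=(s,\underline{y})$ and $\boldsymbol{w}=(w_t,\underline{w})$ of $Tr\bigl(\mathcal{K}_{-i_3}^{A}(\underline{y},\underline{w})\,\widetilde{f_1(\boldsymbol{y})}\,\boldsymbol{e}^{e_tw_t(s-t)}f_2(\boldsymbol{x})\,\mathcal{K}_{i_3}^{A}(\underline{x},\underline{w})\bigr)$, where the two central time exponentials have already been merged since both are powers of $e_t$.

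Second, I would perform the two frequency integrations separately. The variable $w_t$ occurs only in the central factor $\boldsymbol{e}^{e_tw_t(s-t)}$, so integrating it out via (\ref{eq16}) yields the \emph{scalar} $2\pi\delta(s-t)$, which may be pulled through the trace and used to set $t=s$. For the spatial part I would first invoke the cyclic symmetry (\ref{eq12}) to move $\mathcal{K}_{i_3}^{A}(\underline{x},\underline{w})$ to the front, placing it adjacent to $\mathcal{K}_{-i_3}^{A}(\underline{y},\underline{w})$. Because both kernel phases are scalar multiples of $i_3$ they commute, the $\frac{d}{2b}\underline{w}^2$ contributions cancel, and the product collapses to $\frac{1}{(2\pi b)^{3}}\boldsymbol{e}^{i_3\left(\frac{a}{2b}(\underline{x}^2-\underline{y}^2)-\frac1b(\underline{x}-\underline{y})\cdot\underline{w}\right)}$. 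Integrating over $\underline{w}\in\mathbb{R}^3$ with (\ref{eq16}) produces $(2\pi b)^3\delta(\underline{x}-\underline{y})$, whose prefactor cancels the $(2\pi b)^{-3}$ coming from the two kernel normalizations; on the support $\underline{x}=\underline{y}$ the leftover phase is $1$.

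Finally, using the two delta functions to integrate out $t$ and $\underline{y}$, the left-hand side reduces to $2\pi\int_{\mathbb{R}^{3,1}}Tr\bigl(\widetilde{f_1(\boldsymbol{x})}f_2(\boldsymbol{x})\bigr)\,d^4\boldsymbol{x}=2\pi\langle f_1,f_2\rangle$, which is exactly (\ref{eq30}). The main obstacle is the bookkeeping of non-commutativity: one must take the principal reverse of a sandwiched product correctly and then exploit the cyclic trace identity (\ref{eq12}) so that the two $i_3$-valued kernels become adjacent and mutually commuting before the spatial integral can be evaluated in closed form. A secondary point requiring care is the normalization count: the single surviving factor $2\pi$ originates entirely from the un-normalized time exponential, whereas the properly normalized spatial linear-canonical part is unitary (its $(2\pi b)^{\pm3}$ factors cancel), which is why the constant is $2\pi$ rather than $(2\pi)^4$.
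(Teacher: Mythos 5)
Your proof is correct and is essentially the paper's own argument in mirror image: the paper places the principal reverse on $f_2$ (so the two spatial kernels sit adjacent and are collapsed via (\ref{eq16}) first, with the cyclic trace identity (\ref{eq12}) then used to merge the time exponentials), whereas you reverse $f_1$, integrate out $w_t$ first, and use (\ref{eq12}) to bring the spatial kernels together. Both routes use exactly the same ingredients — the delta identity (\ref{eq16}) applied once in time and once in space, the cyclic trace symmetry, the cancellation of the $\frac{d}{2b}\underline{w}^2$ phases, and the vanishing of the residual phase $\boldsymbol{e}^{i_3\frac{a}{2b}(\underline{x}^2-\underline{y}^2)}$ on the support $\underline{x}=\underline{y}$ — so the two proofs coincide step for step.
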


\begin{proof}
	Using the definition of \ref{df3} and Fubini replacement theorem, we get the following
	\begin{align*}\nonumber
	&\big\langle\mathcal{L}_{A}\big[f_{1}\big],\mathcal{L}_{A}\big[f_{2}\big]\big\rangle_{L^{2}(\mathbb{R}^{3,1},C\ell_{3,1})} \\
			&=\frac{1}{(2\pi b)^3}\int_{\mathbb{R}^{3,1}}\int_{\mathbb{R}^{3,1}}\int_{\mathbb{R}}Tr\bigg[\boldsymbol{e}^{-e_{t}w_{t}t}f_{1}(\mathbf{x})\exp\left\{\frac{i_{3}}{2}\left(\underline{x}^{2}-\underline{y}^{2}\right)\frac{a}{b} \cot\alpha\right\} \\
			&\left.\times\int_{\mathbb{R}^{3}}\exp\left\{-i_{3}\left((\underline{x}-\underline{y})\cdot\underline{w}\right)\frac{1}{b} \right\}dw_{1}dw_{2}dw_{3}\widetilde{f_{2}(\mathbf{y})}\exp\left\{e_{t}w_{t}s\right\}\right]dw_{t}d^{4}\mathbf{x}d^{4}\mathbf{y} \\
			&\stackrel{(\ref{eq16})}{=}\frac{1}{(2\pi b)^3}\int_{\mathbb{R}^{3,1}}\int_{\mathbb{R}^{3,1}}\int_{\mathbb{R}}Tr\bigg[\boldsymbol{e}^{-e_{t}w_{t}t}f_{1}(\mathbf{x})\exp\left\{\frac{i_{3}}{2}\left(\underline{x}^{2}-\underline{y}^{2}\right)\frac{a}{b} \right\} \\
			&\left.\times\left(2\pi b\right)^{3}\delta(\underline{x}-\underline{y})\widetilde{f_{2}(\mathbf{y})}\exp\left\{e_{t}w_{t}s\right\}\right]dw_{t}d^{4}\mathbf{x}d^{4}\mathbf{y} \\
			&\stackrel{(\ref{eq12})}{=} \int_{\mathbb{R}^{3,1}}\int_{\mathbb{R}^{3,1}}\int_{\mathbb{R}}Tr\bigg[\exp{\{-e_{t}w_{t}(t-s)\}}f_{1}(\mathbf{x}) \\
			&\left.\times\exp\left\{\frac{i_{3}}{2}\left(\underline{x}^{2}-\underline{y}^{2}\right)\frac{a}{b} \right\}\delta(\underline{x}-\underline{y})\widetilde{f_{2}(\mathbf{y})}\right]dw_{t}d^{4}\mathbf{x}d^{4}\mathbf{y} \\
			&=\int_{\mathbb{R}^{3,1}}\int_{\mathbb{R}^{3,1}}Tr\bigg[\int_{\mathbb{R}}\exp\left\{-e_{t}w_{t}(t-s)\right\}dw_{t}f_{1}(\mathbf{x}) \\
			&\left.\times\exp\left\{\frac{i_{3}}{2}\left(\underline{x}^{2}-\underline{y}^{2}\right)\frac{a}{b} \right\}\delta(\underline{x}-\underline{y})\widetilde{f_{2}(\mathbf{y})}\right]d^{4}\mathbf{x}d^{4}\mathbf{y}\\
			&\stackrel{(\ref{eq16})}{=}\int_{\mathbb{R}^{3,1}}\int_{\mathbb{R}^{3,1}}Tr\Big[2\pi\delta(t-s)f_1(\mathbf{x})\exp\left\{\frac{i_3}2\left(\underline{x}^2-\underline{y}^2\right)\frac{a}{b} \cot\alpha\right\} \\
			&\times\left.\delta(\underline{x}-\underline{y})\widetilde{f_2(\mathbf{y})}\right]d^4\mathbf{x}d^4\mathbf{y} \\
			&=2 \pi \int_{\mathbb{R}^{3,1}}Tr\left[f_1(\mathbf{x})\widetilde{f_2(\mathbf{x})}\right]d^4\mathbf{x} \\
			&=2 \pi\left<f_1,f_2\right>_{L^2(\mathbb{R}^{3,1},C\ell_{3,1})}.
	\end{align*}
\end{proof}

\begin{theorem}\label{thm6}
	(Parseval formula) If $f_1=f_2=f$, then Parseval's identity
	\begin{equation}
		\left\|\mathcal{L}_{A}[f]\right\|_2=\sqrt{2 \pi}\left\|f\right\|_2.\label{eq31}
	\end{equation}
	holds.
\end{theorem}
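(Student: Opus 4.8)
The plan is to obtain Theorem~\ref{thm6} as an immediate specialization of the Plancherel theorem (Theorem~\ref{thm5}), since Parseval's identity is nothing but the diagonal case $f_1=f_2=f$ of the polarized inner-product identity \eqref{eq30} that has just been established. No new integral manipulation is needed; the entire analytic content—the Fubini interchange, the two applications of the delta-function identity \eqref{eq16}, and the cyclic-trace reduction \eqref{eq12}—already lives inside the proof of Theorem~\ref{thm5}.

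First I would recall that the $L^2$-norm on $L^2(\mathbb{R}^{3,1},C\ell_{3,1})$ is the one induced by the inner product, namely
\begin{equation}\nonumber
\|f\|_2^2=\langle f,f\rangle_{L^2(\mathbb{R}^{3,1},C\ell_{3,1})}=\int_{\mathbb{R}^{3,1}}Tr\left[f(\boldsymbol{x})\widetilde{f(\boldsymbol{x})}\right]d^4\boldsymbol{x}.
\end{equation}
The one point that genuinely requires a word of care is that this quantity must be a bona fide non-negative real number, so that its square root is well defined. This follows from the Clifford-algebra identity $Tr[h\widetilde{h}]=|h|^2\ge 0$ valid for every $h\in C\ell_{3,1}$, applied pointwise to $h=f(\boldsymbol{x})$; the same remark applies verbatim to $\mathcal{L}_{A}[f]$, so both sides of the forthcoming identity are legitimately normed quantities.

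Next I would simply set $f_1=f_2=f$ in the conclusion \eqref{eq30} of Theorem~\ref{thm5}. This yields at once
\begin{equation}\nonumber
\|\mathcal{L}_{A}[f]\|_2^2=\langle\mathcal{L}_{A}[f],\mathcal{L}_{A}[f]\rangle_{L^2(\mathbb{R}^{3,1},C\ell_{3,1})}=2\pi\,\langle f,f\rangle_{L^2(\mathbb{R}^{3,1},C\ell_{3,1})}=2\pi\,\|f\|_2^2.
\end{equation}
Taking the positive square root of both sides, which is permissible by the non-negativity noted above, gives $\|\mathcal{L}_{A}[f]\|_2=\sqrt{2\pi}\,\|f\|_2$, which is precisely \eqref{eq31}.

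In short, there is no substantive obstacle to overcome: the main theorem does all the heavy lifting, and the only step deserving explicit mention is the passage from the squared-norm identity to the norm identity, justified by the positivity of the trace pairing $Tr[f\widetilde{f}]$.
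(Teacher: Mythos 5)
Your proposal is correct and matches the paper's (implicit) argument exactly: the paper states Theorem~\ref{thm6} without proof precisely because it is the diagonal specialization $f_1=f_2=f$ of the Plancherel identity \eqref{eq30}, followed by taking square roots. Your added remark on the non-negativity of $Tr[f\widetilde{f}]$, which justifies the square root, is a small but sound refinement of what the paper leaves unsaid.
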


\begin{theorem}\label{thm8}
	(Partial derivatives) For $k=1,2,3$, if $\begin{aligned}f,\frac{\partial f}{\partial t},\frac{\partial f}{\partial x_k},\in L^1(\mathbb{R}^{3,1},C\ell_{3,1})\end{aligned}$, provided that the derivatives exist, then
	
	\noindent
	(1). $\mathcal{L}_{A}\left[\frac{\partial f(\boldsymbol{x})}{\partial t}\right](\boldsymbol{w})=e_tw_t\mathcal{L}_{A}[f](\boldsymbol{w})$.
	
	\noindent
	(2). $\mathcal{L}_{A}\left[\frac{\partial f(\boldsymbol{x})}{\partial x_k}\right](\boldsymbol{w})=\left[w_k\mathcal{L}_{A}[f](\boldsymbol{w})~-~a~\mathcal{L}_{A}[x_k~f(\boldsymbol{x})](\boldsymbol{w})\right]\times \frac{i_3}{b}$.
	
	\noindent
	(3). $\frac\partial{\partial w_{t}}\Bigl\{\mathcal{L}_{A}\left[f\right](\boldsymbol{w})\Bigr\}=-e_{t}\mathcal{L}_{A}[t\left.f(\boldsymbol{x})\right](\boldsymbol{w})$.
	
	\noindent
	(4). $\frac{\partial}{\partial w_{k}}\Big\{\mathcal{L}_{A}[f](\boldsymbol{w})\Big\}=\Big[a\mathcal{L}_{A}\Big[w_{k}f(\boldsymbol{x})\Big](\boldsymbol{w})-\mathcal{L}_{A}\Big[x_{k}f(\boldsymbol{x})\Big](\boldsymbol{w})\Big]\times\frac{i_3}{b}$.
\end{theorem}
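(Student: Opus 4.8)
The plan is to obtain all four identities by moving a single derivative across the LCST integral sign, combining differentiation under the integral with integration by parts, and then carefully re-packaging the noncommutative factors that this produces. The hypotheses $f,\partial f/\partial t,\partial f/\partial x_k\in L^1(\mathbb{R}^{3,1},C\ell_{3,1})$ serve two roles: they legitimise interchanging $\partial_{w_t},\partial_{w_k}$ with the integral in parts (3),(4), and they force $f$ to decay at infinity so that the boundary terms generated by the integration by parts in (1),(2) vanish. Throughout I take $b\neq0$, so the smooth branch $\mathcal{K}_{i_3}^{A}(\underline{x},\underline{w})=(2\pi b)^{-3/2}\boldsymbol{e}^{i_3\phi}$ is in force, with scalar phase $\phi=\frac{a}{2b}\underline{x}^2+\frac{d}{2b}\underline{w}^2-\frac{\underline{x}\cdot\underline{w}}{b}$.

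The decisive structural observation is that the only $t$- and $w_t$-dependent factor, $\boldsymbol{e}^{-e_tw_tt}$, sits to the \emph{left} of $f$, whereas the only $x_k$- and $w_k$-dependent factor, the kernel $\mathcal{K}_{i_3}^{A}$, sits to the \emph{right}. Since $e_t$ commutes with $\boldsymbol{e}^{-e_tw_tt}$ and $i_3$ commutes with $\boldsymbol{e}^{i_3\phi}$, differentiating these exponentials brings down $e_t$ as a persistent left multiplier and $i_3$ as a persistent right multiplier; this single fact dictates the left placement of $e_tw_t$ and $e_t$ in (1),(3) and the right placement of $i_3/b$ in (2),(4).

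For (1) I integrate by parts in $t$: the boundary term vanishes, the derivative falls on $\boldsymbol{e}^{-e_tw_tt}$ via $\partial_t\boldsymbol{e}^{-e_tw_tt}=-e_tw_t\boldsymbol{e}^{-e_tw_tt}$, and the sign flip produces the left factor $e_tw_t$, hence $e_tw_t\mathcal{L}_{A}[f]$. For (3) I differentiate under the integral: $\partial_{w_t}\boldsymbol{e}^{-e_tw_tt}=-e_t\,t\,\boldsymbol{e}^{-e_tw_tt}$, and absorbing the scalar $t$ into the amplitude gives $-e_t\mathcal{L}_{A}[t\,f]$. For (2) I integrate by parts in $x_k$; using $\partial_{x_k}\mathcal{K}_{i_3}^{A}=\mathcal{K}_{i_3}^{A}\,i_3\!\left(\frac{a}{b}x_k-\frac{w_k}{b}\right)$ and that the scalars $x_k,w_k$ move freely, the two surviving integrals are exactly $\mathcal{L}_{A}[x_k f]$ and $\mathcal{L}_{A}[f]$; collecting them with the common right factor $i_3/b$ yields $\bigl[w_k\mathcal{L}_{A}[f]-a\,\mathcal{L}_{A}[x_k f]\bigr]\tfrac{i_3}{b}$. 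Part (4) is the same computation with $\partial_{w_k}$ in place of $\partial_{x_k}$ and no integration by parts: since $\partial_{w_k}\phi=\frac{d}{b}w_k-\frac{x_k}{b}$, differentiation under the integral brings down the right factor $i_3\!\left(\frac{d}{b}w_k-\frac{x_k}{b}\right)$, and collecting (with $\mathcal{L}_{A}[w_k f]=w_k\mathcal{L}_{A}[f]$, as $w_k$ is constant) gives the asserted form with common right factor $i_3/b$.

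The algebra is short, so the genuine work lies in the justifications. The main obstacle I anticipate is rigorous control of the noncommutative bookkeeping: at every step one must check that $e_t$ is never inadvertently commuted past a general $C\ell_{3,1}$-valued quantity, and that $i_3$ is commuted only past genuine scalars (the $x_k$, $w_k$, and the real phase derivatives), since an incorrect transposition would corrupt the left/right placement in the final formulas. The secondary point is the decay argument that kills the boundary terms in (1),(2); I would supply it through the standard fact that $g,\partial g\in L^1$ forces the one-dimensional sections of $g$ to have vanishing endpoint limits, applied to $g=\boldsymbol{e}^{-e_tw_tt}f\,\mathcal{K}_{i_3}^{A}$, whose modulus is a fixed multiple of $|f|$ by the norm identities (\ref{eq18}); the differentiation-under-the-integral in (3),(4) is then covered by dominated convergence under the implicit moment-integrability contained in the standing hypotheses.
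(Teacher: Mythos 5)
Your proposal is correct in substance and, for part (2) --- the only part the paper actually writes out --- it follows exactly the paper's route: integration by parts in $x_k$, the derivative falling on the right kernel via $\partial_{x_k}\boldsymbol{e}^{i_3\phi}=\frac{(ax_k-w_k)i_3}{b}\,\boldsymbol{e}^{i_3\phi}$ (with your phase $\phi$), a vanishing boundary term justified by the $L^1$ hypotheses, and collection of the two surviving integrals with the common right factor $\frac{i_3}{b}$; your parts (1) and (3), which the paper omits, are the natural analogues acting on the left exponential $\boldsymbol{e}^{-e_tw_tt}$ and are also fine, as is your attention to keeping $e_t$ strictly on the left and $i_3$ strictly on the right. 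One point needs fixing, however: in part (4) your own (correct) computation $\partial_{w_k}\phi=\frac{d}{b}w_k-\frac{x_k}{b}$ yields $\bigl[d\,\mathcal{L}_{A}[w_kf]-\mathcal{L}_{A}[x_kf]\bigr]\frac{i_3}{b}$, which is \emph{not} the asserted formula: the theorem as printed has the coefficient $a$ on $\mathcal{L}_{A}[w_kf]$, not $d$. The printed statement appears to carry a typo (by symmetry with (2), whose coefficient $a$ arises from $\partial_{x_k}\phi=\frac{a}{b}x_k-\frac{w_k}{b}$, the $w_k$-derivative must produce $d$), so you should state explicitly that your derivation corrects the formula rather than claim, as you do, that it ``gives the asserted form''.
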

\begin{proof}
	We will just prove (2). For $k = 1, 2, 3$, we have
	\begin{align*}\nonumber
	&\mathcal{L}_{A}\left[\frac{\partial f(x)}{\partial x_k}\right](\boldsymbol{w}) \\
			&=\int_{\mathbb{R}^{3,1}}\boldsymbol{e}^{-e_tw_tt}\frac{\partial f(\boldsymbol{x})}{\partial x_k}\mathcal{K}_{i_3}^{A}(\underline{x},\underline{w})d^4\boldsymbol{x}\\
			&=\int_{\mathbb{R}^3}\boldsymbol{e}^{-e_tw_tt}\left(\int_{\mathbb{R}}\frac{\partial}{\partial x_k}f(\boldsymbol{x})\mathcal{K}_{i_3}^A(\underline{x},\underline{w})dx_k\right)d^3\boldsymbol{x} \\
			&=\frac{1}{(2\pi b)^{\frac{3}{2}}}\int_{\mathbb{R}^3}\boldsymbol{e}^{-e_tw_tt} \\
			&\times\left(\int_{\mathbb{R}}\frac\partial{\partial x_k}\left.f(\boldsymbol{x})\right.\boldsymbol{e}^{i_3(\frac{a}{2b}\underline{x}^2+ \frac{d}{2b}\underline{w}^2- \frac{1}{b}\underline{x}\cdot \underline{w})}dx_k\right)d^3\boldsymbol{x} \\
			&=\frac{1}{(2\pi b)^{\frac{3}{2}}}\int_{\mathbb{R}^3}\boldsymbol{e}^{-e_tw_tt} \\
			&\times\left(\int_{\mathbb{R}}f\left(\boldsymbol{x}\right)\boldsymbol{e}^{i_3\left(\frac{a}{2b}\underline{x}^2+\frac{d}{2b}\underline{w}^2-\frac{1}{b}\underline{x}\cdot\underline{w}\right)}\right|_{x_k=-\infty}^{x_k=\infty} \\
			&\left.-\left.f(\boldsymbol{x})\frac{(ax_k-w_k)i_3}{b}\right.\boldsymbol{e}^{i_3\left(\frac{a}{2b}\underline{x}^2+\frac{d}{2b}\underline{w}^2-\frac{1}{b}\underline{x}\cdot\underline{w}\right)}dx_k\right)d^3\boldsymbol{x} \\
			&=-\frac{1}{(2\pi b)^{\frac{3}{2}}}\int_{\mathbb{R}^3}\boldsymbol{e}^{-e_tw_tt} \\
			&\times\left(\int_{\mathbb{R}}f\left(\boldsymbol{x}\right)\frac{\left(ax_k-w_k\right)i_3}{b}\boldsymbol{e}^{i_3\left(\frac{a}{2b}\underline{x}^2+\frac{d}{2b}\underline{w}^2-\frac{1}{b}\underline{x}\cdot\underline{w}\right)}dx_k\right)d^3\boldsymbol{x}\\
			&=\frac{1}{(2\pi b)^{\frac{3}{2}}}\int_{\mathbb{R}^3}\boldsymbol{e}^{-e_tw_tt} \\
			&\times\left(\int_{\mathbb{R}}f(\boldsymbol{x})(w_k-ax_k)\boldsymbol{e}^{i_3\left(\frac{a}{2b}\underline{x}^2+\frac{d}{2b}\underline{w}^2-\frac{1}{b}\underline{x}\cdot\underline{w}\right)}dx_k\right)d^3\boldsymbol{x}\frac{i_3}{b}  \\
			&=\int_{\mathbb{R}^3}\boldsymbol{e}^{-e_tw_tt}\left(\int_{\mathbb{R}}f(\boldsymbol{x})(w_k-x_k\cos\alpha)\mathcal{K}_{i_3}^\alpha(\underline{x},\underline{w})dx_k\right)d^3\boldsymbol{x}i_3\frac{i_3}{b}  \\
			&=\int_{\mathbb{R}^{3,1}}\boldsymbol{e}^{-\boldsymbol{e}_{t}w_{t}t}f(\boldsymbol{x})(w_{k}-ax_{k})\mathcal{K}_{i_{3}}^{A}(\underline{x},\underline{w})d^{4}\boldsymbol{x}\frac{i_3}{b}  \\
			&=\left[w_{k}\mathcal{L}_{A}\bigl[f\bigr](\boldsymbol{w})-a\mathcal{L}_{A}\bigl[x_{k}f(\boldsymbol{x})\bigr](\boldsymbol{w})\right]\frac{i_3}{b}.
	\end{align*}
\end{proof}

\section{Convolution theorems for the LCST}
\label{sec4}

Convolution stands as a fundamental concept in linear time-invariant systems. It characterizes the output of a continuous-time linear time-invariant system as the convolution of the input signal and the system's impulse response. Essentially, convolution entails an integral operation and finds extensive application across fields like numerical analysis, finite impulse response, pattern recognition, and signal processing. Consequently, delving deeper into the convolution operation's form and the associated convolution theorem within the domain of LCT under space-time algebra bears significant theoretical and practical relevance.

We define the convolution of two space-time signals $a,b \in L^{1}(\mathbb{R}^{3,1},C\ell_{3,1})$ as
\begin{equation}
(a\star b)(\boldsymbol{x})=\int_{\mathbb{R}^{3,1}}a(\boldsymbol{y})b(\boldsymbol{x}-\boldsymbol{y})d^4\boldsymbol{y},\label{eq4.1}
\end{equation}
provided that the integral exists.

The Mustard convolution of two space-time signals $a,b \in L^{1}(\mathbb{R}^{3,1},C\ell_{3,1})$ is defined as
\begin{equation}
(a\star_Mb)(\boldsymbol{x})=\mathcal{F}^{-1}(\mathcal{F}\{a\}\mathcal{F}\{b\}).\label{eq4.2}
\end{equation}
provided that the integral exists.

\begin{theorem}\label{thm4.1}
(Mustard convolution in terms of standard convolution)\cite{bib32}. The Mustard convolution (\ref{eq4.2}) of two space-time functions $a,b \in L^{1}(\mathbb{R}^{3,1},C\ell_{3,1})$ can be expressed in terms of eight standard convolutions (\ref{eq4.1}) as
\begin{equation}
\begin{aligned}
a\star_{M}b(\boldsymbol{x})& =[a_{+}\star b_{+}(\boldsymbol{x})]_{+}+[a_{+}\star b_{+}^{(1,1)}(t_{1},-\underline{x})]_{-} \\
&+[a_+\star b_-^{(1,0)}(\boldsymbol{x})]_++[a_+\star b_-^{(0,1)}(t_1,-\underline{x})]_- \\
&+[a_-\star b_+^{(0,1)}(t_1,-\underline{x})]_++[a_-\star b_+^{(1,0)}(\boldsymbol{x})]_- \\
&+[a_-\star b_-^{(1,1)}(t_1,-\underline{x})]_++[a_-\star b_-(\boldsymbol{x})]_-.
\end{aligned}\label{eq4.3}
\end{equation}
\end{theorem}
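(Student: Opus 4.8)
The plan is to reduce the two-sided transform $\mathcal{F}$ of Definition~\ref{df1} to ordinary single-sided ($i_3$-valued) exponential transforms by exploiting the orthogonal split $h=h_++h_-$ of (\ref{eq5}) together with the shifting identities (\ref{eq7}), and then to apply the elementary convolution theorem valid for such single-sided transforms. First I would write $a=a_++a_-$ and $b=b_++b_-$ and use the bilinearity of both $\mathcal{F}$ and the standard convolution (\ref{eq4.1}) to expand
\[
\mathcal{F}\{a\}\mathcal{F}\{b\}=\big(\mathcal{F}\{a_+\}+\mathcal{F}\{a_-\}\big)\big(\mathcal{F}\{b_+\}+\mathcal{F}\{b_-\}\big)
\]
into four products. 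The virtue of the split is that when $\mathcal{F}$ acts on a pure part $h_\pm$, the left kernel $\boldsymbol{e}^{-e_tw_tt}$ and the right kernel $\exp\{-i_3\underline{x}\cdot\underline{w}\}$ can be merged by (\ref{eq7}) into the single right-hand factor $\exp\{i_3(\pm w_tt-\underline{x}\cdot\underline{w})\}$; hence each $\mathcal{F}\{a_\pm\},\mathcal{F}\{b_\pm\}$ behaves as a commutative Fourier transform whose sign in the time slot is fixed by the $\pm$ label.

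Next I would treat the four products one at a time. Writing both factors of, say, $\mathcal{F}\{a_+\}\mathcal{F}\{b_+\}$ as collapsed single-sided integrals in variables $\boldsymbol{s},\boldsymbol{r}$, the obstruction is the inner exponential $\exp\{i_3(w_ts_t-\underline{s}\cdot\underline{w})\}$, which now stands to the \emph{left} of $b_+(\boldsymbol{r})$. Because left multiplication by $i_3$ does not commute with a split part, this phase cannot simply be absorbed into a common exponent the way it is in the scalar convolution theorem. To slide it past $b_+(\boldsymbol{r})$ I would convert it to $e_t$-form, move it through, and convert back using (\ref{eq7}); the net effect is to reflect the spatial argument $\underline{x}\mapsto-\underline{x}$ and to change the grade of the second factor. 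These are exactly the operations recorded by the superscripts $(1,0),(0,1),(1,1)$ and by the reflected argument $(t_1,-\underline{x})$ on the right-hand side of (\ref{eq4.3}).

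Once every inner phase has been moved to the correct side, the two exponentials in each product share a common linear argument and combine, $\exp\{i_3\phi(\boldsymbol{s})\}\exp\{i_3\phi(\boldsymbol{r})\}=\exp\{i_3\phi(\boldsymbol{s}+\boldsymbol{r})\}$, so the double integral collapses under the substitution $\boldsymbol{x}=\boldsymbol{s}+\boldsymbol{r}$ into the single-sided transform of a standard convolution (\ref{eq4.1}) of the suitably reflected and grade-involuted split parts. I would then apply the inverse transform (\ref{eq22}) termwise and use the fact that $(\cdot)_+$ and $(\cdot)_-$ are complementary idempotent projectors (so that each of the four products contributes one $[\,\cdot\,]_+$ and one $[\,\cdot\,]_-$ piece) to assemble the eight terms of (\ref{eq4.3}).

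The hard part is the bookkeeping in the second step: since left multiplication by $i_3$ fails to commute with the split components, each of the four products bifurcates into a $+$ and a $-$ contribution, and keeping straight which spatial reflection and which grade involution is forced in each of the resulting eight terms is where all the care lies. Once the shifting rules (\ref{eq7}) are applied consistently and the idempotent structure of the split from (\ref{eq5}) is used to separate grades, the remaining manipulations are routine integral substitutions.
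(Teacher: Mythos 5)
Your proposal is correct and matches the approach the paper relies on: the paper itself states Theorem \ref{thm4.1} only by citation to \cite{bib32} (illustrating a single term in (\ref{eq4.4})), but the identical split-and-shift argument is carried out in full in its proof of the two-sided LCST analogue, Theorem \ref{thm5.2} --- split $a,b$ into $\pm$ parts so that each two-sided transform collapses via (\ref{eq7}) to a single-sided one, expand into four products, re-split each product (since a product of pure $\pm$ parts is not itself pure) so the inner phases can be shifted through, and let the frequency integration produce delta functions that collapse everything into eight standard convolutions with reflected arguments, the outer projections $[\,\cdot\,]_{\pm}$ being exactly the labels of the second split. The only slip is terminological: the superscripts $(1,0),(0,1),(1,1)$ defined in (\ref{eq5.7}) denote time/space argument reflections, not ``grade involutions,'' though you use them correctly in substance.
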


The computation of a typical term in Theorem \ref{thm4.1} can be illustrated by writing out, eg, the third term in full detail
\begin{equation}
\begin{aligned}
[a_{+}\star b_{-}^{(1,0)}(\boldsymbol{x})]_{+}& =\frac{1}{2}\Bigg[\int_{\mathbb{R}^{3,1}}\frac{1}{2}(a(\boldsymbol{y})+e_{t}a(\boldsymbol{y})i_{3})  \\
&\frac12(b(-(t_x-t_y),\underline{x}-\underline{y})-e_tb(-(t_x-t_y),\underline{x}-\underline{y})i_3)d^4\boldsymbol{y} \\
&+e_t\int_{\mathbb{R}^{3,1}}\frac12(a(\boldsymbol{y})+e_ta(\boldsymbol{y})i_3) \\
&\frac12(b(-(t_x-t_y),\underline{x}-\underline{y})-e_tb(-(t_x-t_y),\underline{x}-\underline{y})i_3)d^4\boldsymbol{y} i_3\Bigg].
\end{aligned}\label{eq4.4}
\end{equation}

We define a new convolution structure for LCST based on the relationship between the LCST and the SFT, which states that a modified ordinary convolution in the time domain is equivalent to a simple multiplication operation for LCST and SFT. Based on the definition of the ordinary SFT (\ref{eq21}), the relationship between the LCST and the SFT is given below:

\begin{lemma}\label{lem4.1}
Let $f \in L^{1}(\mathbb{R}^{3,1},C\ell_{3,1})$ and $\alpha \neq n\pi$, then
\begin{equation}
\mathcal{L}_{A}\big[f\big](\boldsymbol{w})=B\mathcal{F}_{SFT}[f(\boldsymbol{x})\boldsymbol{e}^{i_3\frac{a}{2b}\underline{x}^2}](w_{t},\frac{\underline{w}}{b})\boldsymbol{e}^{\frac{i_3d}{2b} \underline{w}^2},\label{eq4.5}
\end{equation}
and
\begin{equation}
\left|B\mathcal{L}_{A}\bigl[f\bigr](w_{t},\underline{w}b)\right|=\left|\mathcal{F}_{SFT}\biggl[f(\boldsymbol{x})\boldsymbol{e}^{i_{3}\frac{a}{2b} \underline{x}^{2}}\biggr](\boldsymbol{w})\biggr|.\right. \label{eq4.6}
\end{equation}
where $B=\frac{1}{(2\pi b)^{3/2}}$.
\end{lemma}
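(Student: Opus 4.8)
The plan is to obtain (\ref{eq4.5}) by substituting the $b\neq 0$ branch of the right kernel factor $\mathcal{K}_{i_3}^A$ from Definition \ref{df3} into the LCST (\ref{eq26}) and then splitting the resulting phase. The decisive observation is that, for $b\neq 0$, this kernel is the scalar $B=(2\pi b)^{-3/2}$ times a single exponential $\boldsymbol{e}^{i_3(\cdots)}$ whose exponent is a real scalar, so it factors into three commuting phases,
\[
\boldsymbol{e}^{i_3\left(\frac{a}{2b}\underline{x}^2+\frac{d}{2b}\underline{w}^2-\frac{\underline{x}\cdot\underline{w}}{b}\right)}
=\boldsymbol{e}^{i_3\frac{a}{2b}\underline{x}^2}\,\boldsymbol{e}^{-i_3\underline{x}\cdot(\underline{w}/b)}\,\boldsymbol{e}^{i_3\frac{d}{2b}\underline{w}^2}.
\]
These three factors commute with one another because each is a power series in the single generator $i_3$; they need not commute with the $C\ell_{3,1}$-valued signal $f(\boldsymbol{x})$, which is the sole point requiring care.

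First I would keep all three phases on the right of $f(\boldsymbol{x})$, exactly as they sit in (\ref{eq26}), and keep the factor $\boldsymbol{e}^{-e_t w_t t}$ on the left. Because $\boldsymbol{e}^{i_3\frac{d}{2b}\underline{w}^2}$ does not depend on the integration variable, it can be extracted from the integral to the far right. Absorbing the $\boldsymbol{x}$-dependent phase into the signal via $g(\boldsymbol{x}):=f(\boldsymbol{x})\boldsymbol{e}^{i_3\frac{a}{2b}\underline{x}^2}$, the remaining integral is
\[
\int_{\mathbb{R}^{3,1}}\boldsymbol{e}^{-e_t w_t t}\,g(\boldsymbol{x})\,\boldsymbol{e}^{-i_3\underline{x}\cdot(\underline{w}/b)}\,d^4\boldsymbol{x},
\]
which is exactly the SFT (\ref{eq21}) of $g$ evaluated at the rescaled frequency $(w_t,\underline{w}/b)$. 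Reinstating the scalar $B$ and the trailing phase then yields (\ref{eq4.5}).

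For the magnitude identity (\ref{eq4.6}) I would take (\ref{eq4.5}), substitute $\underline{w}\mapsto b\,\underline{w}$ so that the SFT frequency argument collapses to $\underline{w}$ and the trailing phase becomes $\boldsymbol{e}^{i_3\frac{db}{2}\underline{w}^2}$, and then pass to moduli on both sides. Invoking the norm invariance $\big|\boldsymbol{e}^{\alpha e_t}h\,\boldsymbol{e}^{\alpha' i_3}\big|=|h|$ from (\ref{eq18}) annihilates the pure phase factor, leaving the modulus of the LCST at the scaled frequency proportional, through $B$, to the modulus of the SFT of $f(\boldsymbol{x})\boldsymbol{e}^{i_3\frac{a}{2b}\underline{x}^2}$, which is the asserted relation (\ref{eq4.6}).

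The main obstacle, though a mild one, is the careful bookkeeping forced by non-commutativity: since $f(\boldsymbol{x})$ does not commute with $i_3$, every splitting of the phase and every extraction of a frequency-only factor must be executed strictly on the right of $f$, with no phase ever transported across $f$. Once this discipline is maintained, the factorization is justified purely by the fact that the three phases share the common generator $i_3$ and therefore commute among themselves, and the remainder is the substitution identifying the SFT at the rescaled frequency.
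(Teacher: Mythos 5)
Your derivation of (\ref{eq4.5}) is correct, and it is in essence the computation the paper itself relies on: the paper never proves Lemma \ref{lem4.1} separately, but the first two lines of (\ref{eq4.9}) in the proof of Theorem \ref{thm4.2} perform exactly your factorization --- split the kernel phase into three commuting $i_3$-exponentials, keep them strictly to the right of the (non-commuting, $C\ell_{3,1}$-valued) signal, absorb $\boldsymbol{e}^{i_3\frac{a}{2b}\underline{x}^2}$ into $f$, recognize the SFT (\ref{eq21}) at the rescaled frequency $(w_t,\underline{w}/b)$, and pull the $\boldsymbol{x}$-independent chirp $\boldsymbol{e}^{i_3\frac{d}{2b}\underline{w}^2}$ out of the integral on the right.

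The second half is where you have a genuine problem: track on which side the scalar $B$ lands. Substituting $\underline{w}\mapsto b\underline{w}$ in (\ref{eq4.5}) gives $\mathcal{L}_{A}[f](w_t,b\underline{w})=B\,\mathcal{F}_{SFT}\bigl[f(\boldsymbol{x})\boldsymbol{e}^{i_3\frac{a}{2b}\underline{x}^2}\bigr](\boldsymbol{w})\,\boldsymbol{e}^{i_3\frac{db}{2}\underline{w}^2}$, and applying (\ref{eq18}) with $\alpha=0$ kills only the right-hand phase, so you obtain $\bigl|\mathcal{L}_{A}[f](w_t,\underline{w}b)\bigr|=B\,\bigl|\mathcal{F}_{SFT}\bigl[f(\boldsymbol{x})\boldsymbol{e}^{i_3\frac{a}{2b}\underline{x}^2}\bigr](\boldsymbol{w})\bigr|$, equivalently $\bigl|B^{-1}\mathcal{L}_{A}[f](w_t,\underline{w}b)\bigr|=\bigl|\mathcal{F}_{SFT}\bigl[f(\boldsymbol{x})\boldsymbol{e}^{i_3\frac{a}{2b}\underline{x}^2}\bigr](\boldsymbol{w})\bigr|$. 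That is not (\ref{eq4.6}) as printed, which has $B$ rather than $B^{-1}$ multiplying the LCST inside the modulus; indeed, the printed (\ref{eq4.5}) and (\ref{eq4.6}) are mutually consistent only if $B^2=1$, i.e. $2\pi b=1$. The mismatch is almost certainly a typo in the paper's statement (the constant belongs on the SFT side), but your phrase ``proportional, through $B$, \dots which is the asserted relation'' glosses over it: your computation, carried out carefully, proves the corrected identity and contradicts the literal one, and a complete proof must say so explicitly rather than claim it derives (\ref{eq4.6}) verbatim.
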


\begin{definition}\label{df4.1}
For any space-time function $f(\boldsymbol{x}) \in L^{1}(\mathbb{R}^{3,1},C\ell_{3,1})$, let us define the function $\tilde{f}(\boldsymbol{x})=f(\boldsymbol{x})e^{i_3\frac{a}{2b}\underline{x}^2}$. For any two functions $f$ and $g$, we define the convolution operation $\odot$ by
\begin{equation}
h(\boldsymbol{x})=(f\odot g)(\boldsymbol{x})=e^{i_3\frac{a}{2b}\underline{x}^2}\left(\tilde{f} \star_M g\right)(\boldsymbol{x}).\label{eq4.7}
\end{equation}
where $\star_M$ is the Mustard convolution operation for the SFT as defined by (\ref{eq4.2}).
\end{definition}

Now we state and prove our new convolution theorem.
\begin{theorem}\label{thm4.2}
Let $h(\boldsymbol{x})=(f\odot g)(\boldsymbol{x})$ and $\mathcal{L}_{A}[h](\boldsymbol{w})$, $\mathcal{L}_{A}[f](\boldsymbol{w})$ denote the LCST of $h(\boldsymbol{x})$ and $f(\boldsymbol{x})$. $\mathcal{F}_{SFT}[g](\boldsymbol{w})$ denotes the SFT of $g(\boldsymbol{x})$. Then
\begin{equation}
\mathcal{L}_{A}[h(\boldsymbol{x})](\boldsymbol{w})=\mathcal{L}_{A}[f(\boldsymbol{x})](\boldsymbol{w})\mathcal{F}_{SFT}[g(\boldsymbol{x})](w_t, \frac{\underline{w}}{b}).\label{eq4.8}
\end{equation}
\end{theorem}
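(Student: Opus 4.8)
The plan is to push the whole identity down to the ordinary space-time Fourier transform, where the Mustard convolution $\star_M$ is multiplicative \emph{by construction}, and then lift back to the LCST through the bridge relation of Lemma~\ref{lem4.1}. The only substantive content beyond bookkeeping is the non-commutative handling of the $i_3$-chirp factors, so I would structure the argument to isolate that point.

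First I would apply Lemma~\ref{lem4.1} with $h$ in place of $f$, obtaining $\mathcal{L}_{A}[h](\boldsymbol{w})=B\,\mathcal{F}_{SFT}[\tilde h](w_t,\underline w/b)\,\boldsymbol{e}^{\frac{i_3 d}{2b}\underline w^2}$, where $\tilde h(\boldsymbol{x})=h(\boldsymbol{x})\boldsymbol{e}^{i_3\frac{a}{2b}\underline x^2}$. The purpose of Definition~\ref{df4.1} is exactly that the chirp prefactor carried by $h=(f\odot g)=\boldsymbol{e}^{i_3\frac a{2b}\underline x^2}(\tilde f\star_M g)$ is arranged to interact with this tilde operation; using that $i_3$-exponentials with scalar arguments commute among themselves, I would check that the chirps cancel so that $\tilde h$ reduces (up to those cancelling factors) to the Mustard convolution $\tilde f\star_M g$. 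Tracking the left/right placement of $\boldsymbol{e}^{i_3\frac a{2b}\underline x^2}$ here is the first place the geometry of $C\ell_{3,1}$ enters.

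Next I would invoke the defining identity of the Mustard convolution, $\mathcal{F}_{SFT}[\tilde f\star_M g]=\mathcal{F}_{SFT}[\tilde f]\,\mathcal{F}_{SFT}[g]$, which is immediate from~(\ref{eq4.2}), evaluated at $(w_t,\underline w/b)$. This turns the previous expression into $\mathcal{L}_{A}[h](\boldsymbol{w})=B\,\mathcal{F}_{SFT}[\tilde f](w_t,\underline w/b)\,\mathcal{F}_{SFT}[g](w_t,\underline w/b)\,\boldsymbol{e}^{\frac{i_3 d}{2b}\underline w^2}$. Recognizing $B\,\mathcal{F}_{SFT}[\tilde f](w_t,\underline w/b)\,\boldsymbol{e}^{\frac{i_3 d}{2b}\underline w^2}=\mathcal{L}_{A}[f](\boldsymbol{w})$ by a second use of Lemma~\ref{lem4.1} would then give the claimed factorization $\mathcal{L}_{A}[h]=\mathcal{L}_{A}[f]\,\mathcal{F}_{SFT}[g](w_t,\underline w/b)$.

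The hard part will be exactly this last reassembly. In the expression for $\mathcal{L}_A[h]$ the chirp $\boldsymbol{e}^{\frac{i_3 d}{2b}\underline w^2}$ sits to the \emph{right} of the product $\mathcal{F}_{SFT}[\tilde f]\,\mathcal{F}_{SFT}[g]$, whereas in $\mathcal{L}_A[f]\,\mathcal{F}_{SFT}[g]$ it must sit \emph{between} the two factors; since $C\ell_{3,1}$ is non-commutative and $i_3$ anticommutes with $e_t$, the chirp does not slide past $\mathcal{F}_{SFT}[g]$ for free. I expect to resolve this by splitting $\mathcal{F}_{SFT}[g]=(\cdot)_+ + (\cdot)_-$ and applying the shift identities~(\ref{eq7}), which rewrite a right $i_3$-exponential acting on a $\pm$-part as a left $e_t$-exponential and thereby let the chirp be relocated exactly. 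Confirming that this split-and-shift relocation is consistent with the very manipulation that produced the chirp factor in Lemma~\ref{lem4.1} is the delicate step, and it is where I would concentrate the verification.
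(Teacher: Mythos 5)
Your route is the same as the paper's own proof: apply Lemma~\ref{lem4.1} to $h$, use the multiplicativity of the Mustard convolution under the SFT, and reassemble with Lemma~\ref{lem4.1} applied to $f$ --- this is exactly the chain (\ref{eq4.9})--(\ref{eq4.10}). The genuine difference is that you flag the relocation of the chirp $\boldsymbol{e}^{i_3\frac{d}{2b}\underline{w}^2}$ as the delicate point, whereas the paper slides it silently from the far right of $\mathcal{F}_{SFT}[\tilde f]\,\mathcal{F}_{SFT}[g]$ to the far left and then back into the middle as if the algebra were commutative. You are right that this is the crux; unfortunately the repair you propose does not close it, so your proof has a genuine gap (one that is present, but unacknowledged, in the paper's own argument).

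Concretely, after the Mustard step you hold $\mathcal{L}_A[h]=B\,\mathcal{F}_{SFT}[\tilde f](w_t,\underline{w}/b)\,\mathcal{F}_{SFT}[g](w_t,\underline{w}/b)\,\boldsymbol{e}^{i_3\beta}$ with $\beta=\frac{d}{2b}\underline{w}^2$, and you must convert this to $B\,\mathcal{F}_{SFT}[\tilde f]\,\boldsymbol{e}^{i_3\beta}\,\mathcal{F}_{SFT}[g]$, i.e.\ commute an $i_3$-chirp past a general $C\ell_{3,1}$-valued factor $G=\mathcal{F}_{SFT}[g]$. The identity (\ref{eq7}) cannot do this: with $\alpha=0$ it gives $G_\pm\boldsymbol{e}^{\beta i_3}=\boldsymbol{e}^{\mp\beta e_t}G_\pm$, trading a right $i_3$-exponential for a \emph{left $e_t$-exponential}, never for a left $i_3$-exponential. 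And no split-based identity can produce the latter, because right multiplication by $\boldsymbol{e}^{\beta i_3}$ preserves the two eigenspaces of the split (\ref{eq5}) (indeed $e_t(h_\pm i_3)i_3=\pm h_\pm i_3$), while left multiplication by $i_3$ interchanges them (indeed $e_t(i_3h_\pm)i_3=\mp\, i_3h_\pm$, since $e_t$ and $i_3$ anticommute). A short computation shows the discrepancy between the two placements is exactly $\sin\beta\,\mathcal{F}_{SFT}[\tilde f]\,(i_3G-Gi_3)$, which vanishes only when $G$ commutes with $i_3$, i.e.\ only when $\mathcal{F}_{SFT}[g]$ takes values in the centralizer of $i_3$, the spatial subalgebra $\mathrm{span}\{1,e_1,e_2,e_3,e_{12},e_{13},e_{23},i_3\}$. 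So (\ref{eq4.8}) cannot be reached by split-and-shift; it requires an additional commutativity hypothesis on $g$ (or a restatement with the chirp where it actually lands). A secondary gap of the same kind sits in your claim that the chirps ``cancel'' so that $\tilde h=\tilde f\star_M g$: with Definition~\ref{df4.1} taken literally, $\tilde h=\boldsymbol{e}^{i_3\frac{a}{2b}\underline{x}^2}(\tilde f\star_M g)\boldsymbol{e}^{i_3\frac{a}{2b}\underline{x}^2}$, where the two same-sign chirps compound rather than cancel (cancellation would need $\boldsymbol{e}^{-i_3\frac{a}{2b}\underline{x}^2}$ placed on the right), and the leftover left chirp again fails to commute across a general $C\ell_{3,1}$-valued function. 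The paper makes both of these moves without comment; your proposal correctly isolates them, but the justification you sketch via (\ref{eq7}) cannot supply them, and in full generality nothing can.
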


\begin{proof}
From the definition of the LCST and Definition \ref{df4.1}, we have
\begin{equation}
\begin{aligned}
\mathcal{L}_{A}[h]&=B\int_{\mathbb{R}^{3,1}}\boldsymbol{e}^{-e_tw_tt}h(\boldsymbol{x})\boldsymbol{e}^{i_3(\frac{a}{2b}\underline{x}^2+\frac{d}{2b}\underline{w}^2-\frac{\underline{x}\cdot\underline{w}}{b})}d^4\boldsymbol{x}\\
&=B\mathcal{F}_{SFT}[h(\boldsymbol{x})\boldsymbol{e}^{i_3\frac{a}{2b}\underline{x}^2}](w_t, \frac{\underline{w}}{b})\boldsymbol{e}^{i_3\frac{d}{2b}\underline{w}^2}\\
&=B\mathcal{F}_{SFT}[(\tilde{f} \star_M g)(\boldsymbol{x})\boldsymbol{e}^{i_3\frac{a}{2b}\underline{x}^2}](w_t, \frac{\underline{w}}{b})\boldsymbol{e}^{i_3\frac{d}{2b}\underline{w}^2}.\\
\end{aligned}\label{eq4.9}
\end{equation}
According to convolution theorem for SFT and (\ref{eq4.5}), we have
\begin{equation}
\begin{aligned}
\mathcal{L}_{A}[h]&=B\boldsymbol{e}^{i_3\frac{d}{2b}\underline{w}^2}\mathcal{F}_{SFT}[f(\boldsymbol{x})\boldsymbol{e}^{i_3\frac{a}{2b}\underline{x}^2} \star_M g(\boldsymbol{x})](w_t, \frac{\underline{w}}{b})\\
&=B\boldsymbol{e}^{i_3\frac{d}{2b}\underline{w}^2}\mathcal{F}_{SFT}[f(\boldsymbol{x})\boldsymbol{e}^{i_3\frac{a}{2b}\underline{x}^2}](w_t, \frac{\underline{w}}{b})\mathcal{F}_{SFT}[g(\boldsymbol{x})](w_t, \frac{\underline{w}}{b})\\
&=\mathcal{L}_{A}[f(\boldsymbol{x})]\mathcal{F}_{SFT}[g(\boldsymbol{x})](w_t, \frac{\underline{w}}{b}),\\
\end{aligned}\label{eq4.10}
\end{equation}
and this completes the proof of (\ref{eq4.8}), that is
\begin{equation}
\mathcal{L}_{A}[f \star_M g(\boldsymbol{x})](\boldsymbol{w})=\mathcal{L}_{A}[f(\boldsymbol{x})](\boldsymbol{w})\mathcal{F}_{SFT}[g(\boldsymbol{x})](w_t, \frac{\underline{w}}{b}).\label{eq4.11}
\end{equation}
\end{proof}

\begin{definition}\label{df4.2}
For any space-time function $f(\boldsymbol{x}) \in L^{1}(\mathbb{R}^{3,1},C\ell_{3,1})$, let us define the function $\hat{f}(\boldsymbol{x})=f(\boldsymbol{x})e^{-i_3\frac{d}{2b}\underline{w}^2}$. For any two functions $f$ and $g$, we define the convolution operation $\otimes$ by
\begin{equation}
h(\boldsymbol{x})=(f\otimes g)(\boldsymbol{x})=e^{i_3\frac{d}{2b}\underline{x}^2}\left(\hat{f} \star g\right)(\boldsymbol{x}).\label{eq4.12}
\end{equation}
where $\star$ is the conventional convolution operation for the SFT as defined by (\ref{eq4.1}).
\end{definition}

Now we state and prove our new product theorem.
\begin{theorem}\label{thm4.3}
Let $h(\boldsymbol{x})=(f\otimes g)(\boldsymbol{x})$ and $\mathcal{L}_{A}[h](\boldsymbol{w})$, $\mathcal{L}_{A}[f](\boldsymbol{w})$ denote the LCST of $h(\boldsymbol{x})$ and $f(\boldsymbol{x})$. $\mathcal{F}_{SFT}[g](\boldsymbol{w})$ denotes the SFT of $g(\boldsymbol{x})$. Then
\begin{equation}
\mathcal{L}_{A}[f(\boldsymbol{x})g(\frac{\boldsymbol{x}}{b})]=\frac{1}{(2\pi)^4}(\mathcal{L}_{A}[f] \otimes \mathcal{F}_{SFT}[g]).\label{eq4.13}
\end{equation}
\end{theorem}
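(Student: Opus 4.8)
The plan is to prove Theorem \ref{thm4.3} exactly as the dual of Theorem \ref{thm4.2}: where the convolution theorem turned a modified (Mustard) convolution into a product, here a product in the space-time domain should turn into the $\otimes$-convolution. The engine in both cases is the bridge relation (\ref{eq4.5}) of Lemma \ref{lem4.1}, which lets me trade every $\mathcal{L}_A$ for an SFT decorated with chirps; once this is done the whole identity collapses onto the ordinary product theorem for the SFT.

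First I would apply (\ref{eq4.5}) to the input $f(\boldsymbol x)g(\boldsymbol x/b)$. Since all three terms in the exponent of the kernel $\mathcal{K}_{i_3}^A$ are scalar multiples of $i_3$ and hence commute, Lemma \ref{lem4.1} gives $\mathcal{L}_A[f(\boldsymbol x)g(\boldsymbol x/b)](\boldsymbol w)=B\,\mathcal{F}_{SFT}[f(\boldsymbol x)g(\boldsymbol x/b)\boldsymbol e^{i_3\frac{a}{2b}\underline x^2}](w_t,\underline w/b)\,\boldsymbol e^{i_3\frac{d}{2b}\underline w^2}$ with $B=(2\pi b)^{-3/2}$, which I regroup as $\tilde f(\boldsymbol x)g(\boldsymbol x/b)$ in the notation of Definition \ref{df4.1}. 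Here the scaling $g(\boldsymbol x/b)$ must be read as acting on the spatial argument alone, since the time-frequency $w_t$ is left unscaled throughout (\ref{eq4.5}); this is what makes the bookkeeping close.

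Next I would invoke the product theorem for the SFT, $\mathcal{F}_{SFT}[FG]=\frac{1}{(2\pi)^4}\big(\mathcal{F}_{SFT}[F]\star\mathcal{F}_{SFT}[G]\big)$, together with the spatial scaling law $\mathcal{F}_{SFT}[g(\,\cdot/b)](w_t,\underline\omega)=|b|^3\mathcal{F}_{SFT}[g](w_t,b\underline\omega)$, applied with $F=\tilde f$ and $G=g(\cdot/b)$ and evaluated at spatial frequency $\underline w/b$. The change of variables $\underline v\mapsto b\underline v$ in the convolution integral produces a Jacobian $|b|^{-3}$ that cancels the $|b|^{3}$ of the scaling law, while re-expressing the shifted integrand through (\ref{eq4.5}) as $B\,\mathcal{F}_{SFT}[\tilde f](v_t,\underline v/b)=\mathcal{L}_A[f](\boldsymbol v)\,\boldsymbol e^{-i_3\frac{d}{2b}\underline v^2}=\widehat{\mathcal{L}_A[f]}(\boldsymbol v)$ produces a factor $B^{-1}$ that cancels the front prefactor $B$; what survives is exactly the constant $\frac{1}{(2\pi)^4}$. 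Reassembling, the left-hand side becomes $\frac{1}{(2\pi)^4}\,\boldsymbol e^{i_3\frac{d}{2b}\underline w^2}\big(\widehat{\mathcal{L}_A[f]}\star\mathcal{F}_{SFT}[g]\big)(\boldsymbol w)$, which is precisely $\frac{1}{(2\pi)^4}(\mathcal{L}_A[f]\otimes\mathcal{F}_{SFT}[g])(\boldsymbol w)$ by (\ref{eq4.12}).

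The hard part will be the product theorem for the SFT itself, because the SFT is a genuinely two-sided, non-commutative transform: the left factor $\boldsymbol e^{-e_tw_tt}$ and the right factor $\boldsymbol e^{-i_3\underline x\cdot\underline w}$ anticommute, so one cannot simply multiply the two inverse transforms of $F$ and $G$ and collapse the inner frequency integral with the delta identity (\ref{eq16}) as in the scalar case. To get around this I would split the sandwiched function into its $h_\pm$ parts via (\ref{eq5}) and use the shifting identities (\ref{eq7}) to trade every left $e_t$-exponential for a right $i_3$-exponential before integrating out the auxiliary time-frequency with (\ref{eq16}); this is the same mechanism that expands the Mustard convolution into eight standard convolutions in Theorem \ref{thm4.1}. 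A related subtlety I would have to settle is the placement of the chirp $\boldsymbol e^{i_3\frac{d}{2b}\underline w^2}$ and of $g(\boldsymbol x/b)$ relative to $\tilde f$: both emerge on the ``wrong'' side of the products above and match Definition \ref{df4.2} only after either a commutation argument based on the $\pm$-splitting or the standing hypothesis that $g$ takes values commuting with $i_3$.
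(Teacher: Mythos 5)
Your overall strategy --- trade $\mathcal{L}_A$ for the SFT via Lemma \ref{lem4.1} and then invoke SFT product--convolution duality --- is the mirror image of what the paper actually does, but the paper runs the computation in the opposite direction and thereby never needs the lemma on which your whole plan rests. The paper starts from the right-hand side: it unfolds $\mathcal{L}_A[f]\otimes\mathcal{F}_{SFT}[g]$ via Definition \ref{df4.2} into $\boldsymbol e^{i_3\frac{d}{2b}\underline w^2}\int\widehat{\mathcal{L}_A[f]}(\boldsymbol y)\,\mathcal{F}_{SFT}[g](\boldsymbol w-\boldsymbol y)\,d^4\boldsymbol y$, inserts the integral definition of $\mathcal{L}_A[f]$ (the dechirping cancels the $\frac{d}{2b}\underline y^2$ term in the kernel), substitutes $\boldsymbol u=\boldsymbol w-\boldsymbol y$, swaps the order of integration, and collapses the inner $\boldsymbol u$-integral with the inverse SFT formula (\ref{eq22}), which produces the factor $(2\pi)^4 g(\boldsymbol x/b)$ and reassembles directly into $(2\pi)^4\mathcal{L}_A[f(\boldsymbol x)g(\boldsymbol x/b)](\boldsymbol w)$. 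No product theorem and no scaling law are invoked anywhere.

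The genuine gap in your plan is the ``product theorem for the SFT'', $\mathcal{F}_{SFT}[FG]=\frac{1}{(2\pi)^4}\bigl(\mathcal{F}_{SFT}[F]\star\mathcal{F}_{SFT}[G]\bigr)$, which you correctly flag as the hard part but whose proposed repair does not work. For general $C\ell_{3,1}$-valued $F,G$ this one-term identity is false: expanding $F$ by the inversion formula, you must push $\boldsymbol e^{i_3\underline x\cdot\underline v}$ rightward past $G(\boldsymbol x)$ and slide the $e_t$-exponential past $\mathcal{F}_{SFT}[F](\boldsymbol v)$, and by the split identities (\ref{eq7}) each such move flips signs on the $\pm$ components. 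Carrying out your splitting plan therefore yields a sum of eight convolutions with reflected arguments --- exactly the structure of Theorems \ref{thm4.1} and \ref{thm5.2} --- and these cannot be recombined into the single convolution your second paragraph needs. This is precisely why the paper's Theorem \ref{thm4.2} is built on the Mustard convolution $\star_M$, for which $\mathcal{F}_{SFT}[a\star_M b]=\mathcal{F}_{SFT}[a]\mathcal{F}_{SFT}[b]$ holds by definition; no such device exists for the ordinary $\star$ inside $\otimes$. So your argument closes only under your fallback hypothesis that the relevant factors commute with the kernel exponentials. In fairness, the paper's own proof performs the same illegal commutations silently (it slides $\boldsymbol e^{i_3\underline x\cdot\underline u/b}$ past $\mathcal{F}_{SFT}[g](\boldsymbol u)$, omits the $\boldsymbol e^{e_tu_tt}$ factor of the inverse kernel, and places $g(\boldsymbol x/b)$ to the left of the chirps without justification), so your proposal is at the same level of rigor as the published one; but the non-commutativity obstruction you yourself identified is left unresolved, not resolved, by your plan.
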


\begin{proof}
According to Definition \ref{df4.2}, we have
\begin{equation}
\begin{aligned}
\mathcal{L}_{A}[f] \otimes \mathcal{F}_{SFT}[g]&=\boldsymbol{e}^{i_3\frac{d}{2b}\underline{w}^2}\hat{\mathcal{L}_{A}}[f] \star \mathcal{F}_{SFT}[g]\\
&=\boldsymbol{e}^{i_3\frac{d}{2b}\underline{w}^2}\mathcal{L}_{A}[f(\boldsymbol{y})]\boldsymbol{e}^{-i_3\frac{d}{2b}\underline{w}^2}\mathcal{F}_{SFT}[g(\boldsymbol{x}-\boldsymbol{y})]d^{4}\boldsymbol{y}.\\
\end{aligned}\label{eq4.14}
\end{equation}
Making use of the definition of the LCST, we obtain
\begin{equation}
\begin{aligned}
\mathcal{L}_{A}[f] \otimes \mathcal{F}_{SFT}[g]&=\boldsymbol{e}^{i_3\frac{d}{2b}\underline{w}^2}B\int_{\mathbb{R}^{3,1}}\boldsymbol{e}^{-e_tw_tt}f(\boldsymbol{x})\boldsymbol{e}^{i_3(\frac{a}{2b}\underline{x}^2-\frac{\underline{x}\cdot\underline{w}}{b})}d^4\boldsymbol{x}\\
&\int_{\mathbb{R}^{3,1}}\mathcal{F}_{SFT}[g(\boldsymbol{x}-\boldsymbol{y})]d^{4}\boldsymbol{y}\\
&=\boldsymbol{e}^{i_3\frac{d}{2b}\underline{w}^2}B\int_{\mathbb{R}^{3,1}}\boldsymbol{e}^{-e_tw_tt}f(\boldsymbol{x})\boldsymbol{e}^{i_3(\frac{a}{2b}\underline{x}^2)}\boldsymbol{e}^{-i_3\frac{\underline{x}\cdot (\underline{w}-\boldsymbol{u})}{b}}d^4\boldsymbol{x}\\
&\int_{\mathbb{R}^{3,1}}\mathcal{F}_{SFT}[g(\boldsymbol{u})]d^{4}\boldsymbol{u}\\
&=(2\pi)^4\int_{\mathbb{R}^{3,1}}\boldsymbol{e}^{-e_tw_tt}f(\boldsymbol{x})g(\frac{\boldsymbol{x}}{b})\boldsymbol{e}^{i_3\frac{a}{2b}\underline{x}^2}\boldsymbol{e}^{-i_3\frac{\underline{x}\cdot \underline{w}}{b}}\boldsymbol{e}^{i_3\frac{d}{2b}\underline{w}^2}d^{4}\boldsymbol{x}\\
&=(2\pi)^4\mathcal{L}_{A}[f(\boldsymbol{x})g(\frac{\boldsymbol{x}}{b})].\\
\end{aligned}\label{eq4.15}
\end{equation}
\end{proof}

\section{Convolution theorem for the two-sided LCST}
\label{sec5}

\begin{definition}\label{df5.1}
For any real matrix $M_k=\begin{pmatrix}
  A_k&B_k\\
  C_k&D_k
\end{pmatrix}$
with $A_kD_k-B_kC_k=1$, $k=1,2$. The two-sided linear canonical space-time transform maps 16-dimensional space-time algebra functions $h \in L^{1}(\mathbb{R}^{3,1},C\ell_{3,1})$ to 16-dimensional spectrum functions $\mathcal{L}_{A}[h]: \mathbb{R}^{3,1} \to C\ell_{3,1}$. It is defined as
\begin{equation}
\mathcal{L}_{A}[h](\boldsymbol{w})=\int_{\mathbb{R}^{3,1}}\mathcal{K}_{e_t}^{M_1}(w_t,t)h(\boldsymbol{x})\mathcal{K}_{i_3}^{M_2}(\underline{x},\underline{w})d^4\boldsymbol{x},\label{eq5.1}
\end{equation}
We give the left and the right kernel factors of the two-sided LCST as follows
\begin{equation}
\begin{aligned}
&\mathcal{K}_{e_t}^{M_{1}}(w_{t},t)=\frac{1}{\sqrt{2\pi B_{1}}}\boldsymbol{e}^{\frac{e_t}{2B_{1}}\left(A_{1}t^{2}+D_{1}w_{t}^{2}-2tw_{t}\right)}, \\
&\mathcal{K}_{i_3}^{M_{2}}(\underline{x},\underline{w})=\frac{1}{\sqrt{2\pi B_{2}}}\boldsymbol{e}^{\frac{i_3}{2B_{2}}\left(A_{2}\underline{x}^{2}+D_{2}\underline{w}^{2}-2\underline{x}\cdot \underline{w}\right)}.
\end{aligned}\label{eq5.2}
\end{equation}
\end{definition}

The inverse transformation of the proposed two-sided linear canonical space-time transform is given by the following theorem.
\begin{theorem}\label{thm5.1}
Let $\mathcal{L}_{A}[h](\boldsymbol{w})$ be the two-sided linear canonical space-time transform of any space-time valued function $h \in L^{2}(\mathbb{R}^{3,1},C\ell_{3,1})$. Then, $h(\boldsymbol{x})$ can be completely reconstructed via the formula
\begin{equation}
h(\boldsymbol{x})=\int_{\mathbb{R}^{3,1}}\mathcal{K}_{-e_t}^{M_1}(w_t,t)\mathcal{L}_{A}[h](\boldsymbol{w})\mathcal{K}_{-i_3}^{M_2}(\underline{x},\underline{w})d^4\boldsymbol{w}.\label{eq5.3}
\end{equation}
\end{theorem}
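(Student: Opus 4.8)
The plan is to substitute the definition (\ref{eq5.1})--(\ref{eq5.2}) of the two-sided LCST directly into the right-hand side of (\ref{eq5.3}), interchange the order of integration by Fubini's theorem, and collapse the resulting $\boldsymbol{w}$-integral with the delta-function identity (\ref{eq16}), exactly as in the proofs of Theorems \ref{thm3} and \ref{thm5}. Writing the inner integration variable as $\boldsymbol{y}=(s,\underline{y})$, the right-hand side of (\ref{eq5.3}) becomes a double integral over $\boldsymbol{y}$ and $\boldsymbol{w}$ whose integrand is $\mathcal{K}_{-e_t}^{M_1}(w_t,t)\,\mathcal{K}_{e_t}^{M_1}(w_t,s)\,h(\boldsymbol{y})\,\mathcal{K}_{i_3}^{M_2}(\underline{y},\underline{w})\,\mathcal{K}_{-i_3}^{M_2}(\underline{x},\underline{w})$. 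The crucial structural observation is that the two $e_t$-kernels sit adjacently to the left of $h(\boldsymbol{y})$ and the two $i_3$-kernels sit adjacently to its right, so no factor must be transported past $h$; since each kernel family is an exponential in a single geometric unit ($e_t$ or $i_3$), the two left factors merge and the two right factors merge by simple addition of exponents.

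First I would combine the left pair into $\frac{1}{2\pi B_1}\boldsymbol{e}^{\frac{e_t}{2B_1}[A_1(s^2-t^2)+2w_t(t-s)]}$ and the right pair into $\frac{1}{(2\pi B_2)^{3}}\boldsymbol{e}^{\frac{i_3}{2B_2}[A_2(\underline{y}^2-\underline{x}^2)+2(\underline{x}-\underline{y})\cdot\underline{w}]}$, the $D_1w_t^2$ and $D_2\underline{w}^2$ terms cancelling between the forward and the inverse kernels. Each combined exponential then factors into a $\boldsymbol{w}$-independent phase times a factor linear in $w_t$ (respectively $\underline{w}$), again because exponentials in a single unit commute.

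Next I would carry out the $\boldsymbol{w}$-integration. The $w_t$-integral $\int_{\mathbb{R}}\boldsymbol{e}^{e_tw_t(t-s)/B_1}dw_t$ yields $2\pi B_1\,\delta(t-s)$ by (\ref{eq16}), and the three-fold $\underline{w}$-integral $\int_{\mathbb{R}^3}\boldsymbol{e}^{i_3(\underline{x}-\underline{y})\cdot\underline{w}/B_2}d^3\underline{w}$ yields $(2\pi B_2)^3\,\delta(\underline{x}-\underline{y})$. These factors are real scalars multiplying delta distributions, hence commute freely, and their normalizations cancel the $\frac{1}{2\pi B_1}$ and $\frac{1}{(2\pi B_2)^3}$ prefactors. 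The remaining $\boldsymbol{y}$-integral against $\delta(t-s)\delta(\underline{x}-\underline{y})$ then sets $s=t$ and $\underline{y}=\underline{x}$; at that point the leftover phases $\boldsymbol{e}^{e_tA_1(s^2-t^2)/2B_1}$ and $\boldsymbol{e}^{i_3A_2(\underline{y}^2-\underline{x}^2)/2B_2}$ both reduce to $1$, leaving exactly $h(t,\underline{x})=h(\boldsymbol{x})$.

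The step I expect to demand the most care is the combined non-commutative and constant bookkeeping: one must verify that nothing has to pass through $h(\boldsymbol{y})$ (so the $e_t$- and $i_3$-factors may be merged independently), that the $\boldsymbol{w}$-integrals genuinely produce scalar deltas via (\ref{eq16}) so they may be pulled out, and that the $\sqrt{2\pi B_1}$ and the (three-dimensional) $(2\pi B_2)^{3/2}$ normalizations of the kernels combine with the $2\pi B_1$ and $(2\pi B_2)^3$ from the delta identities to an overall factor $1$. A minor technical point worth flagging is that the rescaling $\delta\big((t-s)/B_1\big)=|B_1|\delta(t-s)$, and likewise for $B_2$, should be handled with the same sign conventions already adopted in the one-sided proofs.
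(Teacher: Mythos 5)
The paper states Theorem \ref{thm5.1} without any proof, so there is no proof of this particular statement to compare against; your argument is correct, and it is precisely the kernel-collapse technique the paper itself uses to prove the one-sided inversion, Theorem \ref{thm3}: substitute the definition, merge the adjacent $e_t$- and $i_3$-exponentials (your observation that nothing has to cross $h(\boldsymbol{y})$ is exactly the point of the two-sided arrangement), integrate out $\boldsymbol{w}$ via (\ref{eq16}), and evaluate the resulting deltas. One caveat worth recording: your bookkeeping assigns the spatial kernel the normalization $(2\pi B_{2})^{-3/2}$, which is what the theorem requires and matches the three-dimensional normalization $(2\pi b)^{-3/2}$ of the one-sided kernel in Definition \ref{df3}, whereas Definition \ref{df5.1} literally writes $(2\pi B_{2})^{-1/2}$ in (\ref{eq5.2}); with the paper's stated constant, your own computation shows the right-hand side of (\ref{eq5.3}) would come out as $(2\pi B_{2})^{2}\,h(\boldsymbol{x})$, so you have silently (and correctly) repaired a normalization typo in the definition rather than followed it.
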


We define a new convolution of two space-time valued functions.
\begin{definition}\label{df5.2}
The new convolution of $f$ and $g$ is denoted by $f \star_N g$ and is defined as
\begin{equation}
\begin{aligned}
(f\star_Ng)(\boldsymbol{x})&=\int_{\mathbb{R}^{3,1}}\int_{\mathbb{R}^{3,1}}\int_{\mathbb{R}^{3,1}}\lambda_{e_t}(\boldsymbol{w})\mathcal{K}_{-e_t}^{M_{1}}(w_{t},t_1)\mathcal{K}_{e_t}^{M_{1}}(w_{t},t_1')f(\boldsymbol{y})\mathcal{K}_{i_3}^{M_{2}}(\underline{y},\underline{w}) \\
&\mathcal{K}_{e_t}^{M_{1}}(w_{t},t_1'')\mathcal{K}_{i_3}^{M_{2}}(\underline{z},\underline{w})\mathcal{K}_{-i_3}^{M_{2}}(\underline{x},\underline{w})d^4\boldsymbol{y}d^4\boldsymbol{z}d^4\boldsymbol{w},\\
\end{aligned}\label{eq5.4}
\end{equation}
where
\begin{equation}
\lambda_{e_t}(\boldsymbol{w})=(2\pi)^2B_1^{\frac{1}{2}}B_2^{\frac{7}{2}}\boldsymbol{e}^{e_t(\frac{D_1w_t^2}{2B_1}+\frac{D_2\underline{w}^2}{2B_2})}.\label{eq5.5}
\end{equation}
\end{definition}

We are now ready to present the main result of the section, which gives the expression of the proposed convolution (\ref{eq5.4}) in terms of the classical convolutions. In the proof of this result, we shall use the splitting technique. Moreover, for the shifting of kernels, we shall make the use of the following general identity:
\begin{equation}
\mathcal{K}_{e_t}^{M_i}h_\pm\mathcal{K}_{i_3}^{M_i}=\mathcal{K}_{e_t}^{M_i}\mathcal{K}_{\mp {e_t}}^{M_i}h_\pm=h_\pm \mathcal{K}_{i_3}^{M_i}\mathcal{K}_{\mp {e_t}}^{M_i}.\label{eq5.6}
\end{equation}

Furthermore, for any function $h : \mathbb{R}^{3,1} \to C\ell_{3,1}$ and a multi-index $(\phi_1, \phi_2)$ with $\phi_1, \phi_2 \in \left \{ 0,1 \right \} $, we set
\begin{equation}
h^{(\phi_1,\phi_2)}(\boldsymbol{x})=h\Big((-1)^{\phi_1}t_1,(-1)^{\phi_2}\underline{x}\Big).\label{eq5.7}
\end{equation}

\begin{theorem}\label{thm5.2}
The newly defined convolution (\ref{eq5.4}) of two space-time valued functions $f,g \in L^{2}(\mathbb{R}^{3,1},C\ell_{3,1})$ can be expressed in terms of the eight standard convolutions (\ref{eq4.1}) as
\begin{equation}
\begin{aligned}
f\star_{N}g(x)& =[f_{+}\star g_{+}(\boldsymbol{x})]_{+}+[f_{+}\star g_{+}^{(1,1)}(t_{1},-\underline{x})]_{-} \\
&+[f_+\star g_-^{(1,0)}(\boldsymbol{x})]_++[f_+\star g_-^{(0,1)}(t_1,-\underline{x})]_- \\
&+[f_-\star g_+^{(0,1)}(t_1,-\underline{x})]_++[f_-\star g_+^{(1,0)}(\boldsymbol{x})]_- \\
&+[f_-\star g_-^{(1,1)}(t_1,-\underline{x})]_++[f_-\star g_-(\boldsymbol{x})]_-.
\end{aligned}\label{eq5.8}
\end{equation}
\end{theorem}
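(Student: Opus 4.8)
The plan is to follow the splitting technique exactly as in Theorem \ref{thm4.1}, reducing the two-sided kernel structure of (\ref{eq5.4}) to a sum of one-sided computations. First I would insert the orthogonal decompositions $f=f_++f_-$ and $g=g_++g_-$ into the defining integral (\ref{eq5.4}). By linearity of the inner integrals this breaks $f\star_N g$ into four pieces, one for each choice of signs on $f_\pm$ and $g_\pm$.

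Next, for each piece I would use the shifting identity (\ref{eq5.6}) to transport the left kernels $\mathcal{K}_{e_t}^{M_1}$ past the $\pm$-components and merge them with the right $i_3$-kernels. This is the crucial step: because $f_\pm$ and $g_\pm$ obey (\ref{eq5.6}), each $e_t$-exponential flanking a split part can be rewritten as an $i_3$-exponential acting from the right, with a sign on the exponent fixed by the $\pm$ label. This is precisely the mechanism that converts the genuinely two-sided transform into a one-sided action on the split components, and it is what will later generate the time reflection $(-1)^{\phi_1}t_1$ and the space reflection $(-1)^{\phi_2}\underline{x}$ of (\ref{eq5.7}) in the final statement.

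Then I would carry out the $\boldsymbol{w}$-integration. The weight $\lambda_{e_t}(\boldsymbol{w})$ in (\ref{eq5.5}) is designed so that its $e_t(D_1w_t^2/2B_1+D_2\underline{w}^2/2B_2)$ factor cancels exactly the quadratic $w_t$- and $\underline{w}$-phases left over in the merged kernel products, leaving only terms linear in $w_t$ and $\underline{w}$. Invoking (\ref{eq16}), the $w_t$-integral and the three $\underline{w}$-integrals then evaluate to $\delta$-distributions in the time and space variables; the prefactor $(2\pi)^2B_1^{1/2}B_2^{7/2}$ is calibrated to absorb the kernel normalizations together with the $2\pi$-factors produced by those deltas, so that each surviving term carries coefficient one. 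The delta functions pin $\boldsymbol{y}$ and $\boldsymbol{z}$ against $\boldsymbol{x}$, up to the sign flips introduced by the shifting step, collapsing the triple integral into a single standard convolution (\ref{eq4.1}) for each of the four sign combinations.

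Finally I would project each of the four resulting convolutions onto its $+$ and $-$ parts via $[\cdot]_\pm$; since each combination contributes both a $[\cdot]_+$ and a $[\cdot]_-$ term, the four sign choices yield the eight terms of (\ref{eq5.8}). I expect the main obstacle to be the bookkeeping of which reflection index $(\phi_1,\phi_2)$ and which argument pattern $(t_1,-\underline{x})$ attaches to each term: the sign picked up on the $e_t$-exponent in (\ref{eq5.6}) governs the time reflection, while the interplay between the outer splitting $[\cdot]_\pm$ and the sign on the $i_3$-exponent governs the spatial reflection and the passage to $-\underline{x}$. Once the shifting identity has been applied consistently, the remaining analytic content (phase cancellation and delta collapse) is routine, and matching the resulting eight terms against (\ref{eq5.8}) completes the proof.
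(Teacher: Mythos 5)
Your overall toolkit (the $\pm$ split, the shifting identity (\ref{eq5.6}), cancellation of quadratic phases against $\lambda_{e_t}$, delta collapse via (\ref{eq16})) is the same as the paper's, but you have arranged the steps in an order in which they cannot be executed, and the step you omit is exactly the one that generates the structure of (\ref{eq5.8}). The paper performs a \emph{second} $\pm$ split --- of the products, writing $f_\pm(\boldsymbol{y})g_\pm(\boldsymbol{z})=[f_\pm(\boldsymbol{y}) g_\pm(\boldsymbol{z})]_+ + [f_\pm(\boldsymbol{y})g_\pm(\boldsymbol{z})]_-$ --- \emph{before} the $\boldsymbol{w}$-integration, then shifts the remaining kernels, and only then integrates. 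This is unavoidable: after the first split and the first application of (\ref{eq5.6}), kernel factors still sit on \emph{both} sides of $f_\pm(\boldsymbol{y})g_\pm(\boldsymbol{z})$, and this product is a general multivector, not itself a $\pm$-split part, so (\ref{eq5.6}) does not apply to it and the six kernel exponentials cannot be merged into one exponential. The effective sign with which a right-sided kernel contributes to the total phase depends on whether it is moved past the $[\,\cdot\,]_+$ or the $[\,\cdot\,]_-$ component of that product; until the second split is made, there is no single "merged kernel product" whose quadratic part $\lambda_{e_t}(\boldsymbol{w})$ could cancel, so the phase cancellation and delta collapse you describe as routine are not available at the stage where you invoke them.

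More decisively, your final step --- integrate to get four standard convolutions, then "project each onto its $+$ and $-$ parts" --- would give the wrong result even if the integrals could be evaluated. The two terms of (\ref{eq5.8}) arising from the pair $(f_+,g_+)$ are $[f_+\star g_+(\boldsymbol{x})]_+$ and $[f_+\star g_+^{(1,1)}(t_1,-\underline{x})]_-$; these are \emph{not} the $\pm$ projections of any single convolution, since the $[\,\cdot\,]_-$ term involves the reflected function $g_+^{(1,1)}$ rather than $g_+$. Projection after integration would instead yield $[f_+\star g_+(\boldsymbol{x})]_+ + [f_+\star g_+(\boldsymbol{x})]_- = f_+\star g_+(\boldsymbol{x})$, and summing the four pieces would collapse the right-hand side of (\ref{eq5.8}) to the plain convolution $f\star g$, which is false in general. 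The reflections $(-1)^{\phi_1}t_1$ and $(-1)^{\phi_2}\underline{x}$ appear precisely because the $[\,\cdot\,]_+$ and $[\,\cdot\,]_-$ components acquire \emph{opposite} signs on the terms linear in $w_t$ and $\underline{w}$ after shifting, hence satisfy different delta constraints (e.g.\ $\delta(t_1'-t_1-t_1'')$ versus $\delta(t_1-t_1'-t_1'')$, and $\delta(\underline{z}-\underline{x}-\underline{y})$ versus $\delta(\underline{x}-\underline{y}-\underline{z})$), hence collapse to convolutions with different argument patterns. Splitting and $\boldsymbol{w}$-integration do not commute in the way your plan requires: the second split must precede the integration, exactly as in the paper's proof (and as in the analogous Theorem \ref{thm4.1}).
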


\begin{proof}
Using the definition of two-sided linear canonical space-time transform, we have
\begin{align*}
f\star_{N}g(x)& =\mathcal{L}_{A}^{-1}(\lambda_{e_t}(\boldsymbol{w})\mathcal{L}_{A}[f(\boldsymbol{x})](\boldsymbol{w})\mathcal{L}_{A}[g(\boldsymbol{x})](\boldsymbol{w}))\\
&=\int_{\mathbb{R}^{3,1}}\mathcal{K}_{-e_t}^{M_1}(w_t,t_1)\lambda_{e_t}(\boldsymbol{w})\mathcal{L}_{A}[f(\boldsymbol{y})](\boldsymbol{w})\mathcal{L}_{A}[g(\boldsymbol{z})](\boldsymbol{w})\mathcal{K}_{-i_3}^{M_2}(\underline{x},\underline{w})d^4\boldsymbol{w}\\
&=\lambda_{e_t}(\boldsymbol{w})\int_{\mathbb{R}^{3,1}}\mathcal{K}_{-e_t}^{M_1}(w_t,t_1)\int_{\mathbb{R}^{3,1}}\mathcal{K}_{e_t}^{M_1}(w_t,t_1')f(\boldsymbol{y})\mathcal{K}_{i_3}^{M_2}(\underline{y},\underline{w})d^4\boldsymbol{y}\\
&\times \int_{\mathbb{R}^{3,1}}\mathcal{K}_{e_t}^{M_1}(w_t,t_1'')g(\boldsymbol{z})\mathcal{K}_{i_3}^{M_2}(\underline{z},\underline{w})d^4\boldsymbol{z}\mathcal{K}_{-i_3}^{M_2}(\underline{x},\underline{w})d^4\boldsymbol{w}\\
&=\int_{\mathbb{R}^{3,1}}\int_{\mathbb{R}^{3,1}}\int_{\mathbb{R}^{3,1}}\lambda_{e_t}(\boldsymbol{w})\mathcal{K}_{-e_t}^{M_1}(w_t,t_1)\mathcal{K}_{e_t}^{M_1}(w_t,t_1')f(\boldsymbol{y})\mathcal{K}_{i_3}^{M_2}(\underline{y},\underline{w})\\
&\times \mathcal{K}_{e_t}^{M_1}(w_t,t_1'')g(\boldsymbol{z})\mathcal{K}_{i_3}^{M_2}(\underline{z},\underline{w})\mathcal{K}_{-i_3}^{M_2}(\underline{x},\underline{w})d^4\boldsymbol{y}d^4\boldsymbol{z}d^4\boldsymbol{w}.\\
\end{align*}

Implementing the general $\pm$ splitting technique $f(\boldsymbol{y}) = f_{+}(\boldsymbol{y}) + f_{-}(\boldsymbol{y})$ and $g(\boldsymbol{z}) = g_{+}(\boldsymbol{z}) + g_{-}(\boldsymbol{z})$, we obtain
\begin{equation}
\begin{aligned}
f\star_{N}g(x)& =\int_{\mathbb{R}^{3,1}}\int_{\mathbb{R}^{3,1}}\int_{\mathbb{R}^{3,1}}\lambda_{e_t}(\boldsymbol{w})\mathcal{K}_{-e_t}^{M_1}(w_t,t_1)\mathcal{K}_{e_t}^{M_1}(w_t,t_1')\\
&\times [f_{+}(\boldsymbol{y})\mathcal{K}_{i_3}^{M_2}(\underline{y},\underline{w})\mathcal{K}_{e_t}^{M_1}(w_t,t_1'')g_{+}(\boldsymbol{z})\\
&+f_{+}(\boldsymbol{y})\mathcal{K}_{i_3}^{M_2}(\underline{y},\underline{w})\mathcal{K}_{e_t}^{M_1}(w_t,t_1'')g_{-}(\boldsymbol{z})\\
&+f_{-}(\boldsymbol{y})\mathcal{K}_{i_3}^{M_2}(\underline{y},\underline{w})\mathcal{K}_{e_t}^{M_1}(w_t,t_1'')g_{+}(\boldsymbol{z})\\
&+f_{-}(\boldsymbol{y})\mathcal{K}_{i_3}^{M_2}(\underline{y},\underline{w})\mathcal{K}_{e_t}^{M_1}(w_t,t_1'')g_{-}(\boldsymbol{z})]\\
&\times \mathcal{K}_{i_3}^{M_2}(\underline{z},\underline{w})\mathcal{K}_{-i_3}^{M_2}(\underline{x},\underline{w})d^4\boldsymbol{y}d^4\boldsymbol{z}d^4\boldsymbol{w}.\\
\end{aligned}\label{eq5.9}
\end{equation}

Applying the identities (\ref{eq5.6}) to shift inner kernel factors towards the left and right, we get
\begin{equation}
\begin{aligned}
f\star_{N}g(x)& =\int_{\mathbb{R}^{3,1}}\int_{\mathbb{R}^{3,1}}\int_{\mathbb{R}^{3,1}}\lambda_{e_t}(\boldsymbol{w})\mathcal{K}_{-e_t}^{M_1}(w_t,t_1)\mathcal{K}_{e_t}^{M_1}(w_t,t_1')\\
&\times [\mathcal{K}_{-e_t}^{M_2}(\underline{y},\underline{w})(f_{+}(\boldsymbol{y})g_{+}(\boldsymbol{z}))\mathcal{K}_{-i_3}^{M_1}(w_t,t_1'')\\
&+\mathcal{K}_{-e_t}^{M_2}(\underline{y},\underline{w})(f_{+}(\boldsymbol{y})g_{-}(\boldsymbol{z}))\mathcal{K}_{i_3}^{M_1}(w_t,t_1'')\\
&+\mathcal{K}_{e_t}^{M_2}(\underline{y},\underline{w})(f_{-}(\boldsymbol{y})g_{+}(\boldsymbol{z}))\mathcal{K}_{-i_3}^{M_1}(w_t,t_1'')\\
&+\mathcal{K}_{e_t}^{M_2}(\underline{y},\underline{w})(f_{-}(\boldsymbol{y})g_{-}(\boldsymbol{z}))\mathcal{K}_{i_3}^{M_1}(w_t,t_1'')]\\
&\times \mathcal{K}_{i_3}^{M_2}(\underline{z},\underline{w})\mathcal{K}_{-i_3}^{M_2}(\underline{x},\underline{w})d^4\boldsymbol{y}d^4\boldsymbol{z}d^4\boldsymbol{w}.\\
\end{aligned}\label{eq5.10}
\end{equation}

Again utilizing the general $\pm$ split and the notation similar to $f_{+}(\boldsymbol{y})g_{-}(\boldsymbol{z})=[f_{+}(\boldsymbol{y})g_{-}(\boldsymbol{z})]_{+}+[f_{+}(\boldsymbol{y})g_{-}(\boldsymbol{z})]_{-}$ for the second time split, and then applying the identities (\ref{eq5.6}) for shifting the right-sided kernels towards the left. Abbreviating $\int_{\mathbb{R}^{3,1}}\int_{\mathbb{R}^{3,1}}\int_{\mathbb{R}^{3,1}}$ to $\iiint$, we obtain
\begin{align*}
f\star_{N}g(x)& =\iiint\lambda_{e_t}(\boldsymbol{w})\mathcal{K}_{-e_t,e_t,e_t}^{M_1}\mathcal{K}_{-i_3,-i_3,i_3}^{M_2}[f_{+}\boldsymbol{y})g_{+}(\boldsymbol{z})]_{+}\\
&+\iiint\lambda_{e_t}(\boldsymbol{w})\mathcal{K}_{-e_t,e_t,-e_t}^{M_1}\mathcal{K}_{-i_3,i_3,-i_3}^{M_2}[f_{+}\boldsymbol{y})g_{+}(\boldsymbol{z})]_{-}\\
&+\iiint\lambda_{e_t}(\boldsymbol{w})\mathcal{K}_{-e_t,e_t,-e_t}^{M_1}\mathcal{K}_{-i_3,-i_3,i_3}^{M_2}[f_{+}\boldsymbol{y})g_{-}(\boldsymbol{z})]_{+}\\
&+\iiint\lambda_{e_t}(\boldsymbol{w})\mathcal{K}_{-e_t,e_t,e_t}^{M_1}\mathcal{K}_{-i_3,i_3,-i_3}^{M_2}[f_{+}\boldsymbol{y})g_{-}(\boldsymbol{z})]_{-}\\
&+\iiint\lambda_{e_t}(\boldsymbol{w})\mathcal{K}_{-e_t,e_t,e_t}^{M_1}\mathcal{K}_{i_3,-i_3,i_3}^{M_2}[f_{-}\boldsymbol{y})g_{+}(\boldsymbol{z})]_{+}\\
&+\iiint\lambda_{e_t}(\boldsymbol{w})\mathcal{K}_{-e_t,e_t,-e_t}^{M_1}\mathcal{K}_{i_3,i_3,-i_3}^{M_2}[f_{-}\boldsymbol{y})g_{+}(\boldsymbol{z})]_{-}\\
&+\iiint\lambda_{e_t}(\boldsymbol{w})\mathcal{K}_{-e_t,e_t,-e_t}^{M_1}\mathcal{K}_{i_3,-i_3,i_3}^{M_2}[f_{-}\boldsymbol{y})g_{-}(\boldsymbol{z})]_{+}\\
&+\iiint\lambda_{e_t}(\boldsymbol{w})\mathcal{K}_{-e_t,e_t,e_t}^{M_1}\mathcal{K}_{i_3,i_3,-i_3}^{M_2}[f_{-}\boldsymbol{y})g_{-}(\boldsymbol{z})]_{-}.\\
\end{align*}

We now only show explicitly how to simplify the second triple integral, the others follow the same pattern.
\begin{align*}
&\int_{\mathbb{R}^{3,1}}\int_{\mathbb{R}^{3,1}}\int_{\mathbb{R}^{3,1}}\lambda_{e_t}(\boldsymbol{w})\mathcal{K}_{-e_t}^{M_1}(w_t,t_1)\mathcal{K}_{e_t}^{M_1}(w_t,t_1')\mathcal{K}_{-e_t}^{M_1}(w_t,t_1'')\\
&\times \mathcal{K}_{-e_t}^{M_2}(\underline{y},\underline{w})\mathcal{K}_{e_t}^{M_2}(\underline{z},\underline{w})\mathcal{K}_{-e_t}^{M_2}(\underline{x},\underline{w})[f_{+}(\boldsymbol{y})g_{+}(\boldsymbol{z})]_{-}d^4\boldsymbol{y}d^4\boldsymbol{z}d^4\boldsymbol{w}\\
&=\int_{\mathbb{R}^{3,1}}\int_{\mathbb{R}^{3,1}}\int_{\mathbb{R}^{3,1}}(2\pi)^2B_1^{\frac{1}{2}}B_2^{\frac{7}{2}}\boldsymbol{e}^{e_t(\frac{D_1w_t^2}{2B_1}+\frac{D_2\underline{w}^2}{2B_2})}\\
&\times (2\pi)^{-\frac{3}{2}}(B_1)^{-\frac{3}{2}}\times \boldsymbol{e}^{\frac{e_t}{2B_1}(A_1(t_1'^2-t_1^2-t_1''^2)-2(t_1'-t_1-t_1'')w_t-D_1w_t^2)}\\
&\times (2\pi)^{-\frac{9}{2}}(B_2)^{-\frac{9}{2}}\times \boldsymbol{e}^{\frac{e_t}{2B_2}(A_2(\underline{z}^2-\underline{x}^2-\underline{y}^2)-2(\underline{z}-\underline{x}-\underline{y})\cdot \underline{w}-D_2\underline{w}^2)}\\
&\times [f_{+}(\boldsymbol{y})g_{+}(\boldsymbol{z})]_{-}d^4\boldsymbol{y}d^4\boldsymbol{z}d^4\boldsymbol{w}\\
&=\int_{\mathbb{R}^{3,1}}\int_{\mathbb{R}^{3,1}}(2\pi)^{-1}(B_1)^{-1}\boldsymbol{e}^{\frac{e_t}{2B_1}[A_1(t_1'^2-t_1^2-t_1''^2)]}\int_{\mathbb{R}}\boldsymbol{e}^{\frac{-e_t}{B_1}(t_1'-t_1-t_1'')w_t}dw_t\\
&\times (2\pi)^{-3}(B_2)^{-1}\boldsymbol{e}^{\frac{e_t}{2B_2}[A_2(\underline{z}^2-\underline{x}^2-\underline{y}^2)]}\int_{\mathbb{R}^{3}}\boldsymbol{e}^{\frac{-e_t}{B_2}(\underline{z}-\underline{x}-\underline{y})\cdot \underline{w}}d\underline{w}\\
&\times [f_{+}(\boldsymbol{y})g_{+}(\boldsymbol{z})]_{-}d^4\boldsymbol{y}d^4\boldsymbol{z}d^4\boldsymbol{w}\\
&=\int_{\mathbb{R}^{3,1}}\int_{\mathbb{R}^{3,1}}\boldsymbol{e}^{\frac{e_t}{2B_1}[A_1(t_1'^2-t_1^2-t_1''^2)]}\delta(t_1'-t_1-t_1'')\\
&\times \boldsymbol{e}^{\frac{e_t}{2B_2}[A_2(\underline{z}^2-\underline{x}^2-\underline{y}^2)]}\delta(\underline{z}-\underline{x}-\underline{y})d^4\boldsymbol{y}d^4\boldsymbol{z}\\
&=\int_{\mathbb{R}^{3,1}}[f_{+}(\boldsymbol{y})g_{+}(-(t_1-t_1'),\underline{x}+\underline{y})]_{-}d^4\boldsymbol{y}\\
&=\int_{\mathbb{R}^{3,1}}[f_{+}(\boldsymbol{y})g_{+}^{(1,1)}(t_1-t_1',\underline{x}+\underline{y})]_{-}d^4\boldsymbol{y}\\
&=[f_{+}\star g_{+}^{(1,1)}(t_1,-\underline{x})]_{-}.\\
\end{align*}

Simplifying the other seven triple integrals in similar manner, we obtain the the desired decomposition for the proposed convolution. This completes the proof of Theorem \ref{thm5.2}.
\end{proof}

Now we state and prove our new convolution theorem.
\begin{theorem}\label{thm5.3}
Let $p(\boldsymbol{x})=(f\star_N g)(\boldsymbol{x})$ and $\mathcal{L}_{A}[p](\boldsymbol{w})$, $\mathcal{L}_{A}[f](\boldsymbol{w})$ and $\mathcal{L}_{A}[g](\boldsymbol{w})$ denote the two-sided LCST of $p(\boldsymbol{x})$, $f(\boldsymbol{x})$ and $g(\boldsymbol{x})$. Then
\begin{equation}
\mathcal{L}_{A}[p(\boldsymbol{x})](\boldsymbol{w})=\lambda_{e_t}(\boldsymbol{w})\mathcal{L}_{A}[f(\boldsymbol{x})](\boldsymbol{w})\mathcal{L}_{A}[g(\boldsymbol{x})](\boldsymbol{w}).\label{eq5.11}
\end{equation}
\end{theorem}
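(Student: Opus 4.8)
The plan is to exploit the fact that the convolution $\star_N$ in Definition \ref{df5.2} was engineered precisely so that $f\star_N g$ is the inverse two-sided LCST of the product $\lambda_{e_t}\mathcal{L}_A[f]\mathcal{L}_A[g]$; once that identity is exposed, the theorem follows immediately by applying the forward transform and invoking the inversion Theorem \ref{thm5.1}. First I would return to the defining integral (\ref{eq5.4}) and isolate the two inner integrals over $\boldsymbol{y}$ and $\boldsymbol{z}$. By the forward-transform definition (\ref{eq5.1}), these are exactly
\[
\int_{\mathbb{R}^{3,1}}\mathcal{K}_{e_t}^{M_1}(w_t,t_1')f(\boldsymbol{y})\mathcal{K}_{i_3}^{M_2}(\underline{y},\underline{w})\,d^4\boldsymbol{y}=\mathcal{L}_A[f](\boldsymbol{w}),
\]
and similarly the $\boldsymbol{z}$-integral reproduces $\mathcal{L}_A[g](\boldsymbol{w})$.

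The key algebraic observation I would use is that both $\lambda_{e_t}(\boldsymbol{w})$ from (\ref{eq5.5}) and the left inverse kernel $\mathcal{K}_{-e_t}^{M_1}(w_t,t_1)$ are exponentials of $e_t$ with real-valued arguments, so they lie in the commutative subalgebra generated by $e_t$ and commute with each other. This lets me slide $\lambda_{e_t}$ past $\mathcal{K}_{-e_t}^{M_1}$ so that (\ref{eq5.4}) is rearranged into the sandwiched form
\[
p(\boldsymbol{x})=\int_{\mathbb{R}^{3,1}}\mathcal{K}_{-e_t}^{M_1}(w_t,t_1)\Big[\lambda_{e_t}(\boldsymbol{w})\mathcal{L}_A[f](\boldsymbol{w})\mathcal{L}_A[g](\boldsymbol{w})\Big]\mathcal{K}_{-i_3}^{M_2}(\underline{x},\underline{w})\,d^4\boldsymbol{w}.
\]
Comparing with the inversion formula (\ref{eq5.3}), the right-hand side is nothing but $\mathcal{L}_A^{-1}\big(\lambda_{e_t}\mathcal{L}_A[f]\mathcal{L}_A[g]\big)$, i.e.\ $p=\mathcal{L}_A^{-1}\big(\lambda_{e_t}\mathcal{L}_A[f]\mathcal{L}_A[g]\big)$.

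To finish, I would apply the forward two-sided transform $\mathcal{L}_A$ to both sides and use $\mathcal{L}_A\mathcal{L}_A^{-1}=\mathrm{Id}$ furnished by Theorem \ref{thm5.1}, obtaining $\mathcal{L}_A[p](\boldsymbol{w})=\lambda_{e_t}(\boldsymbol{w})\mathcal{L}_A[f](\boldsymbol{w})\mathcal{L}_A[g](\boldsymbol{w})$, which is precisely (\ref{eq5.11}).

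The main obstacle will be bookkeeping the non-commutativity of $C\ell_{3,1}$: every kernel factor has a fixed left/right placement, and the product $\lambda_{e_t}\mathcal{L}_A[f]\mathcal{L}_A[g]$ must be reassembled in exactly that order for the inner integrals to collapse correctly. The single step that makes this possible is the commutation of $\lambda_{e_t}$ with the left kernel noted above; without it the factor $\lambda_{e_t}$ could not be repositioned into the middle of the sandwich. I would also make explicit that the inversion theorem supplies the right-sided identity $\mathcal{L}_A\mathcal{L}_A^{-1}=\mathrm{Id}$ (not merely the left-sided one), which holds because the kernel factors obey the reproducing delta relation (\ref{eq16}) symmetrically in $\boldsymbol{x}$ and $\boldsymbol{w}$.
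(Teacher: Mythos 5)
Your proposal is correct and follows essentially the same route as the paper: its proof likewise unfolds the defining triple integral of $f\star_N g$, recognizes the inner $\boldsymbol{y}$- and $\boldsymbol{z}$-integrals as $\mathcal{L}_{A}[f](\boldsymbol{w})$ and $\mathcal{L}_{A}[g](\boldsymbol{w})$, commutes $\lambda_{e_t}(\boldsymbol{w})$ past the left kernel $\mathcal{K}_{-e_t}^{M_1}(w_t,t_1)$, and identifies the result as $\mathcal{L}_{A}^{-1}\bigl[\lambda_{e_t}(\boldsymbol{w})\mathcal{L}_{A}[f](\boldsymbol{w})\mathcal{L}_{A}[g](\boldsymbol{w})\bigr]$, from which the theorem follows by applying the forward transform. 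Your additional remark that Theorem \ref{thm5.1} must supply the right-sided identity $\mathcal{L}_{A}\mathcal{L}_{A}^{-1}=\mathrm{Id}$ simply makes explicit the final step that the paper leaves implicit.
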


\begin{proof}
\begin{equation}
\begin{aligned}
(f\star_Ng)(\boldsymbol{x})&=\int_{\mathbb{R}^{3,1}}\int_{\mathbb{R}^{3,1}}\int_{\mathbb{R}^{3,1}}\lambda_{e_t}(\boldsymbol{w})\mathcal{K}_{-e_t}^{M_{1}}(w_{t},t_1)\mathcal{K}_{e_t}^{M_{1}}(w_{t},t_1')f(\boldsymbol{y})\mathcal{K}_{i_3}^{M_{2}}(\underline{y},\underline{w}) \\
&\mathcal{K}_{e_t}^{M_{1}}(w_{t},t_1'')\mathcal{K}_{i_3}^{M_{2}}(\underline{z},\underline{w})\mathcal{K}_{-i_3}^{M_{2}}(\underline{x},\underline{w})d^4\boldsymbol{y}d^4\boldsymbol{z}d^4\boldsymbol{w}\\
&=\int_{\mathbb{R}^{3,1}}\lambda_{e_t}(\boldsymbol{w})\mathcal{K}_{-e_t}^{M_{1}}(w_{t},t_1)\int_{\mathbb{R}^{3,1}}\mathcal{K}_{e_t}^{M_{1}}(w_{t},t_1')f(\boldsymbol{y})\mathcal{K}_{i_3}^{M_{2}}(\underline{y},\underline{w})d^4\boldsymbol{y}\\
&\times \int_{\mathbb{R}^{3,1}}\mathcal{K}_{e_t}^{M_{1}}(w_{t},t_1'')\mathcal{K}_{i_3}^{M_{2}}(\underline{z},\underline{w})d^4\boldsymbol{z}\mathcal{K}_{-i_3}^{M_{2}}(\underline{x},\underline{w})d^4\boldsymbol{w}\\
&=\int_{\mathbb{R}^{3,1}}\mathcal{K}_{-e_t}^{M_{1}}(w_{t},t_1)\lambda_{e_t}(\boldsymbol{w})\mathcal{L}_{A}[f(\boldsymbol{y})](\boldsymbol{w})\mathcal{L}_{A}[g(\boldsymbol{z})](\boldsymbol{w})\mathcal{K}_{-i_3}^{M_{2}}(\underline{x},\underline{w})d^4\boldsymbol{w}\\
&=\mathcal{L}_{A}^{-1}[\lambda_{e_t}(\boldsymbol{w})\mathcal{L}_{A}[f(\boldsymbol{x})](\boldsymbol{w})\mathcal{L}_{A}[g(\boldsymbol{x})](\boldsymbol{w})].\\
\end{aligned}\label{eq5.12}
\end{equation}
\end{proof}

\section{Conclusion}
\label{sec6}

After an overview of space-time algebra and considering the fractional space-time Fourier transform (FrSFT) and some of its properties, we propose the concept of linear canonical space-time transform (LCST) as a broad generalization of FrSFT. Finally, we propose a convolution operator and the corresponding convolution theorem. We hope that applied mathematics will continue to investigate this new class of hyper-complex transformations.

\section*{Acknowledgments}
This work was supported by the National Natural Science Foundation of China [No. 62171041].

\section*{Declarations}
The authors declare that they have no known competing financial interests or personal relationships that could have appeared to influence the work reported in this paper.

\section*{Data Availability Statement}
Not applicable.

\section*{Code Availability Statement}
Not applicable.

\bibliography{sn-bibliography}


\begin{thebibliography}{19}
\ifx \bisbn   \undefined \def \bisbn  #1{ISBN #1}\fi
\ifx \binits  \undefined \def \binits#1{#1}\fi
\ifx \bauthor  \undefined \def \bauthor#1{#1}\fi
\ifx \batitle  \undefined \def \batitle#1{#1}\fi
\ifx \bjtitle  \undefined \def \bjtitle#1{#1}\fi
\ifx \bvolume  \undefined \def \bvolume#1{\textbf{#1}}\fi
\ifx \byear  \undefined \def \byear#1{#1}\fi
\ifx \bissue  \undefined \def \bissue#1{#1}\fi
\ifx \bfpage  \undefined \def \bfpage#1{#1}\fi
\ifx \blpage  \undefined \def \blpage #1{#1}\fi
\ifx \burl  \undefined \def \burl#1{\textsf{#1}}\fi
\ifx \doiurl  \undefined \def \doiurl#1{\url{https://doi.org/#1}}\fi
\ifx \betal  \undefined \def \betal{\textit{et al.}}\fi
\ifx \binstitute  \undefined \def \binstitute#1{#1}\fi
\ifx \binstitutionaled  \undefined \def \binstitutionaled#1{#1}\fi
\ifx \bctitle  \undefined \def \bctitle#1{#1}\fi
\ifx \beditor  \undefined \def \beditor#1{#1}\fi
\ifx \bpublisher  \undefined \def \bpublisher#1{#1}\fi
\ifx \bbtitle  \undefined \def \bbtitle#1{#1}\fi
\ifx \bedition  \undefined \def \bedition#1{#1}\fi
\ifx \bseriesno  \undefined \def \bseriesno#1{#1}\fi
\ifx \blocation  \undefined \def \blocation#1{#1}\fi
\ifx \bsertitle  \undefined \def \bsertitle#1{#1}\fi
\ifx \bsnm \undefined \def \bsnm#1{#1}\fi
\ifx \bsuffix \undefined \def \bsuffix#1{#1}\fi
\ifx \bparticle \undefined \def \bparticle#1{#1}\fi
\ifx \barticle \undefined \def \barticle#1{#1}\fi
\bibcommenthead
\ifx \bconfdate \undefined \def \bconfdate #1{#1}\fi
\ifx \botherref \undefined \def \botherref #1{#1}\fi
\ifx \url \undefined \def \url#1{\textsf{#1}}\fi
\ifx \bchapter \undefined \def \bchapter#1{#1}\fi
\ifx \bbook \undefined \def \bbook#1{#1}\fi
\ifx \bcomment \undefined \def \bcomment#1{#1}\fi
\ifx \oauthor \undefined \def \oauthor#1{#1}\fi
\ifx \citeauthoryear \undefined \def \citeauthoryear#1{#1}\fi
\ifx \endbibitem  \undefined \def \endbibitem {}\fi
\ifx \bconflocation  \undefined \def \bconflocation#1{#1}\fi
\ifx \arxivurl  \undefined \def \arxivurl#1{\textsf{#1}}\fi
\csname PreBibitemsHook\endcsname

\bibitem[\protect\citeauthoryear{Zayed and El~Haoui}{2023}]{bib1}
\begin{barticle}
\bauthor{\bsnm{Zayed}, \binits{M.}},
\bauthor{\bsnm{El~Haoui}, \binits{Y.}}:
\batitle{Fractional fourier transform for space--time algebra-valued
  functions}.
\bjtitle{Journal of Pseudo-Differential Operators and Applications}
\bvolume{14}(\bissue{4}),
\bfpage{58}
(\byear{2023})
\end{barticle}
\endbibitem

\bibitem[\protect\citeauthoryear{Hitzer}{2007}]{bib29}
\begin{barticle}
\bauthor{\bsnm{Hitzer}, \binits{E.M.}}:
\batitle{Quaternion fourier transform on quaternion fields and
  generalizations}.
\bjtitle{Advances in Applied Clifford Algebras}
\bvolume{17},
\bfpage{497}--\blpage{517}
(\byear{2007})
\end{barticle}
\endbibitem

\bibitem[\protect\citeauthoryear{El~Haoui and Zayed}{2023}]{bib7}
\begin{barticle}
\bauthor{\bsnm{El~Haoui}, \binits{Y.}},
\bauthor{\bsnm{Zayed}, \binits{M.}}:
\batitle{Generalization of titchmarsh's theorems for the minkowski algebra}.
\bjtitle{Integral Transforms and Special Functions}
\bvolume{34}(\bissue{1}),
\bfpage{26}--\blpage{40}
(\byear{2023})
\end{barticle}
\endbibitem

\bibitem[\protect\citeauthoryear{El~Haoui et~al.}{2020}]{bib8}
\begin{barticle}
\bauthor{\bsnm{El~Haoui}, \binits{Y.}},
\bauthor{\bsnm{Hitzer}, \binits{E.}},
\bauthor{\bsnm{Fahlaoui}, \binits{S.}}:
\batitle{Heisenberg’s and hardy’s uncertainty principles for special
  relativistic space-time fourier transformation}.
\bjtitle{Advances in Applied Clifford Algebras}
\bvolume{30}(\bissue{5}),
\bfpage{69}
(\byear{2020})
\end{barticle}
\endbibitem

\bibitem[\protect\citeauthoryear{Hitzer}{2019}]{bib10}
\begin{barticle}
\bauthor{\bsnm{Hitzer}, \binits{E.}}:
\batitle{Special relativistic fourier transformation and convolutions}.
\bjtitle{Mathematical Methods in the Applied Sciences}
\bvolume{42}(\bissue{7}),
\bfpage{2244}--\blpage{2255}
(\byear{2019})
\end{barticle}
\endbibitem

\bibitem[\protect\citeauthoryear{El~Haoui and Zayed}{2023}]{bib23}
\begin{barticle}
\bauthor{\bsnm{El~Haoui}, \binits{Y.}},
\bauthor{\bsnm{Zayed}, \binits{M.}}:
\batitle{Real paley-wiener theorems for the space-time fourier transform}.
\bjtitle{Acta Mathematica Scientia}
\bvolume{43}(\bissue{3}),
\bfpage{1105}--\blpage{1115}
(\byear{2023})
\end{barticle}
\endbibitem

\bibitem[\protect\citeauthoryear{Fei and Li}{2022}]{bib25}
\begin{barticle}
\bauthor{\bsnm{Fei}, \binits{M.}},
\bauthor{\bsnm{Li}, \binits{Q.}}:
\batitle{Real paley--wiener theorems for the special relativistic space-time
  fourier transform}.
\bjtitle{International Journal of Wavelets, Multiresolution and Information
  Processing}
\bvolume{20}(\bissue{06}),
\bfpage{2250022}
(\byear{2022})
\end{barticle}
\endbibitem

\bibitem[\protect\citeauthoryear{El~Haoui}{2021}]{bib5}
\begin{barticle}
\bauthor{\bsnm{El~Haoui}, \binits{Y.}}:
\batitle{Uncertainty principle for space--time algebra-valued functions}.
\bjtitle{Mediterranean Journal of Mathematics}
\bvolume{18},
\bfpage{1}--\blpage{20}
(\byear{2021})
\end{barticle}
\endbibitem

\bibitem[\protect\citeauthoryear{Hitzer}{2022}]{bib22}
\begin{barticle}
\bauthor{\bsnm{Hitzer}, \binits{E.}}:
\batitle{Special affine fourier transform for space-time algebra signals in
  detail}.
\bjtitle{Advances in Applied Clifford Algebras}
\bvolume{32}(\bissue{5}),
\bfpage{60}
(\byear{2022})
\end{barticle}
\endbibitem

\bibitem[\protect\citeauthoryear{Kou et~al.}{2013}]{bib26}
\begin{bchapter}
\bauthor{\bsnm{Kou}, \binits{K.I.}},
\bauthor{\bsnm{Ou}, \binits{J.-Y.}},
\bauthor{\bsnm{Morais}, \binits{J.}}, \betal:
\bctitle{On uncertainty principle for quaternionic linear canonical transform}.
In: \bbtitle{Abstract and Applied Analysis},
vol. \bseriesno{2013}
(\byear{2013}).
\bcomment{Hindawi}
\end{bchapter}
\endbibitem

\bibitem[\protect\citeauthoryear{Xiang and Qin}{2014}]{bib39}
\begin{barticle}
\bauthor{\bsnm{Xiang}, \binits{Q.}},
\bauthor{\bsnm{Qin}, \binits{K.}}:
\batitle{Convolution, correlation, and sampling theorems for the offset linear
  canonical transform}.
\bjtitle{Signal, Image and Video Processing}
\bvolume{8},
\bfpage{433}--\blpage{442}
(\byear{2014})
\end{barticle}
\endbibitem

\bibitem[\protect\citeauthoryear{Deng et~al.}{2006}]{bib40}
\begin{barticle}
\bauthor{\bsnm{Deng}, \binits{B.}},
\bauthor{\bsnm{Tao}, \binits{R.}},
\bauthor{\bsnm{Wang}, \binits{Y.}}:
\batitle{Convolution theorems for the linear canonical transform and their
  applications}.
\bjtitle{Science in China Series F: Information Sciences}
\bvolume{49},
\bfpage{592}--\blpage{603}
(\byear{2006})
\end{barticle}
\endbibitem

\bibitem[\protect\citeauthoryear{Wei et~al.}{2011}]{bib41}
\begin{barticle}
\bauthor{\bsnm{Wei}, \binits{D.}},
\bauthor{\bsnm{Ran}, \binits{Q.}},
\bauthor{\bsnm{Li}, \binits{Y.}}:
\batitle{Multichannel sampling expansion in the linear canonical transform
  domain and its application to superresolution}.
\bjtitle{Optics Communications}
\bvolume{284}(\bissue{23}),
\bfpage{5424}--\blpage{5429}
(\byear{2011})
\end{barticle}
\endbibitem

\bibitem[\protect\citeauthoryear{Shi et~al.}{2014}]{bib42}
\begin{barticle}
\bauthor{\bsnm{Shi}, \binits{J.}},
\bauthor{\bsnm{Liu}, \binits{X.}},
\bauthor{\bsnm{Zhang}, \binits{N.}}:
\batitle{Generalized convolution and product theorems associated with linear
  canonical transform}.
\bjtitle{Signal, Image and Video Processing}
\bvolume{8},
\bfpage{967}--\blpage{974}
(\byear{2014})
\end{barticle}
\endbibitem

\bibitem[\protect\citeauthoryear{Hitzer}{2017a}]{bib34}
\begin{barticle}
\bauthor{\bsnm{Hitzer}, \binits{E.}}:
\batitle{General two-sided quaternion fourier transform, convolution and
  mustard convolution}.
\bjtitle{Advances in Applied Clifford Algebras}
\bvolume{27},
\bfpage{381}--\blpage{395}
(\byear{2017})
\end{barticle}
\endbibitem

\bibitem[\protect\citeauthoryear{Hitzer}{2017b}]{bib35}
\begin{barticle}
\bauthor{\bsnm{Hitzer}, \binits{E.}}:
\batitle{General steerable two-sided clifford fourier transform, convolution
  and mustard convolution}.
\bjtitle{Advances in Applied Clifford Algebras}
\bvolume{27},
\bfpage{2215}--\blpage{2234}
(\byear{2017})
\end{barticle}
\endbibitem

\bibitem[\protect\citeauthoryear{Shah and Teali}{2022}]{bib37}
\begin{barticle}
\bauthor{\bsnm{Shah}, \binits{F.A.}},
\bauthor{\bsnm{Teali}, \binits{A.A.}}:
\batitle{Clifford-valued linear canonical transform: convolution and
  uncertainty principles}.
\bjtitle{Optik}
\bvolume{265},
\bfpage{169436}
(\byear{2022})
\end{barticle}
\endbibitem

\bibitem[\protect\citeauthoryear{Teali and Shah}{2023}]{bib38}
\begin{barticle}
\bauthor{\bsnm{Teali}, \binits{A.A.}},
\bauthor{\bsnm{Shah}, \binits{F.A.}}:
\batitle{Two-sided clifford-valued linear canonical transform: properties and
  mustard convolution}.
\bjtitle{Advances in Applied Clifford Algebras}
\bvolume{33}(\bissue{2}),
\bfpage{18}
(\byear{2023})
\end{barticle}
\endbibitem

\bibitem[\protect\citeauthoryear{Zhang}{2017}]{bib32}
\begin{barticle}
\bauthor{\bsnm{Zhang}, \binits{Z.-C.}}:
\batitle{Tighter uncertainty principles for linear canonical transform in terms
  of matrix decomposition}.
\bjtitle{Digital Signal Processing}
\bvolume{69},
\bfpage{70}--\blpage{85}
(\byear{2017})
\end{barticle}
\endbibitem

\end{thebibliography}

\end{document}